%% file: main.tex
\documentclass[11pt]{article}
\usepackage{amsmath,amssymb,bbm,amsthm}
\usepackage{fullpage}
\usepackage{thm-restate,color,xspace}
\usepackage{graphicx}
\usepackage{subfiles} 
\usepackage[dvipsnames]{xcolor}
\usepackage{tikz}
\usepackage{tikz-cd}
\usepackage{mathrsfs}
\usepackage{environ}
\usepackage{todonotes}

\usepackage{amsmath}   
\usepackage{multirow}  
\usepackage{booktabs}

\usepackage[square,numbers,sort&compress]{natbib}
\usepackage{minted}
\usepackage{caption}
\usepackage{subcaption}

\usepackage{algorithm}
\usepackage{algpseudocode}

\usepackage{float}

\usepackage[shortlabels]{enumitem}

\usepackage[
    colorlinks=true,
    citecolor=blue,
    linkcolor=green!60!black,
    urlcolor=blue
]{hyperref} 
\usepackage{cleveref}
\usepackage{textgreek}

\usepackage{ulem} 
\usepackage{scalerel} 

\newcommand{\cI}{\mathcal I}

\newcommand{\R}{\mathbb{R}}

\newcommand{\ceil}[1]{\lceil#1\rceil}

\newtheorem{defn}{Definition}

\newtheorem{theorem}{Theorem}
\newtheorem{lemma}[theorem]{Lemma}
\newtheorem{definition}[theorem]{Definition}
\newtheorem{corollary}[theorem]{Corollary}

\newtheorem{remark}[theorem]{Remark}

\usepackage[title]{appendix}

\newcommand{\conv}{\text{\tt conv}}

\newcommand{\slope}{\text{\tt slope}}

\newcommand{\argmin}{\text{argmin}}
\newcommand{\argmax}{\text{argmax}}

\newcommand{\bitan}{\texttt{\tt bitan}}

\newcommand{\remove}[1]{}

\makeatletter
\newcommand{\plainfootnote}[1]{%
  \begingroup
  \renewcommand{\@makefnmark}{}
  \footnotetext{#1}%
  \endgroup
}
\makeatother

\title{Efficient Convexification of Kolmogorov-Arnold Networks with Polynomial Functional Forms Via a Continuous Graham Scan Approach
}
\author{Owen Li$^1$\textsuperscript{$\dagger$},  Daniel Ovalle$^2$\textsuperscript{$\dagger$}, Barnabas Poczos$^3$, Carl D. Laird$^2$, Ignacio E. Grossmann$^2$ \\ \& Javier Pe\~na$^4$\thanks{Corresponding author. Email: \texttt{jfp@andrew.cmu.edu}}\\ \\
    \small $^1$School of Operations Research and Information Engineering, Cornell University, Ithaca, NY.\\
    \small $^2$Department of Chemical Engineering, Carnegie Mellon University, Pittsburgh, PA.\\
    \small $^3$School of Computer Science, Carnegie Mellon University, Pittsburgh, PA.\\
    \small $^4$Tepper School of Business, Carnegie Mellon University, Pittsburgh, PA.\\}
\date{\today}

\begin{document}

\maketitle
\plainfootnote{$^\dagger$These authors contributed equally.}
\subfile{sections/abstract}
\subfile{sections/introduction}

\subfile{sections/background}

\subfile{sections/roots}
\subfile{sections/bitan_no_mark}
\subfile{sections/graham}

\subfile{sections/gam}
\subfile{sections/results_v2}


\bibliography{references}

\end{document}

%% file: sections/abstract.tex
\section*{Abstract}
\small
Deterministic global optimization of nonlinear models is important in many scientific and engineering applications. This framework typically involves repeatedly solving convex relaxations of the nonconvex problem, meaning that the strength of the relaxations and the cost of computing them directly determine overall efficiency and solution quality.
In this work, we develop a tailored continuous convexification framework for Kolmogorov–Arnold Networks in which the univariate components are polynomial functions. By exploiting the additive separable structure of this architecture, the relaxation problem reduces to computing tight convex envelopes of univariate polynomials. We propose a continuous variant of the classical Graham Scan that constructs these envelopes exactly by identifying the bitangents of the polynomial convex hull without discretization or factorable reformulations. We establish the correctness of the algorithm and characterize its computational complexity, and show how these envelopes can be combined to construct strong convex relaxations for polynomial KANs. Computational results demonstrate that the proposed relaxations are both strong and robust, often producing bounds that are comparable, or even orders of magnitude tighter than relaxations of state-of-the-art global optimization solvers while remaining computationally efficient.

%% file: sections/introduction.tex
\section{Introduction}

Deterministic global optimization focuses on solving nonconvex problems while providing guarantees of global optimality \cite{locatelli2021global}. A key component of these methods is the construction of convex relaxations, which replace the original nonconvex problem with a tractable convex approximation whose optimum provides a valid bound on the global solution \cite{ballerstein2014extended}. Convex relaxations are used in many algorithmic frameworks, including branch-and-bound methods \cite{csallner2000multisection,belotti2009branching,berenguel2013interval,fowkes2013branch,araya2016interval}, outer approximation \cite{duran1986outer,sherali1992global, anstreicher2010computable}, cutting-plane approaches \cite{bonami2019solving, santana2020convex, bienstock2020outer}, and other convexification-based algorithms \cite{khajavirad2012convex,wechsung2015reverse, kazazakis2018arbitrarily}. In all these settings, the quality of the relaxation strongly affects performance: tight relaxations provide stronger bounds, reduce the search space, and speed up convergence, while weak relaxations lead to poor bounds and higher computational cost \cite{tawarmalani2005polyhedral, tawarmalani2013convexification}. Developing convex relaxations that are both strong and efficient remains a central challenge in deterministic global optimization.

In many modern applications, optimization problems include surrogate models that approximate complex physical processes, simulations, or data-driven relationships \cite{bhosekar2018advances, schweidtmann2019deterministic, ovalle2025conformal}. These surrogates may come from machine learning models or reduced-order approximations of expensive simulations, allowing optimization without repeatedly running costly underlying models \cite{cozad2014learning}. Such approaches have been employed in diverse fields,  including process systems engineering \cite{zhang2016data}, energy systems design \cite{chatzivasileiadis2020decision}, healthcare \cite{bertsimas2016analytics}, and supply chain management \cite{ovalle2024integration}. Embedding these surrogates into a global optimization problem introduces nonlinearities and structural complexity that must be handled carefully. As a result, constructing convex relaxations that are both tight and computationally efficient is challenging, especially when the surrogate has highly nonlinear components or high-dimensional interactions \cite{parker2024formulations}. These difficulties motivate the use of surrogates with structures that can be exploited to produce stronger and more scalable relaxations.

Kolmogorov–Arnold Networks (KANs) have recently been proposed as an alternative neural network architecture that departs from the conventional multilayer perceptron paradigm \cite{liu2024kan}. Their design is motivated by the Kolmogorov–Arnold representation theorem \cite{kolmogorov1957representations}, which states that multivariate continuous functions can be represented as compositions of sums of univariate  functions \cite{schmidt2021kolmogorov}. In a KAN with $L$ layers, the model takes the form
\begin{equation}\label{eq:kan}
    \mathbf y = \left(\Phi_L \circ \Phi_{L-1} \circ \cdots \circ \Phi_1 \right)(\mathbf x)
\end{equation}
where each layer $\Phi_K: \R^{d_{K-1}} \to \R^{d_K}$ is defined component-wise as
\begin{equation}
(\Phi_K(\mathbf x))_i = \sum_j^{d_{K-1}} \phi_{K, i, j}(x_j),
\end{equation}
with $\phi_{K,i,j}$ denoting univariate functions. Unlike standard neural networks, which apply fixed nonlinear activations after linear transformations, KANs learn the nonlinear univariate components directly, with interactions between variables arising through layer composition. This makes the models flexible, often more interpretable, and highly separable \cite{somvanshi2025survey}. For global optimization, this structure is particularly useful, as it reduces the convexification challenge from general multivariate nonlinearities to constructing convex envelopes for the individual univariate functions.

In the original KAN formulation, the univariate functions $\phi_{k,i,j}$ are often represented using spline-based parameterizations, such as linear combinations of B-spline basis functions \cite{liu2024kan}. While flexible for function approximation, these representations are less convenient for deriving analytical convex relaxations due to their discrete nature \cite{karia2025deterministic}.  
In this work, we therefore focus on polynomial KANs, a relevant class of KANs in which each univariate component is represented by a polynomial, enabling both expressive modeling and tractable convexification.
Formally, we assume that each $\phi_{k,i,j}$ is a univariate polynomial of degree $n$. This restriction is well justified for several reasons. First, polynomials can approximate smooth functions arbitrarily well on compact domains when the degree is high enough, preserving the  expressive power of the model \cite{xue2010polynomial}. Second, polynomials have compact algebraic representations, which are convenient for optimization \cite{misener2023formulating}. Most importantly, the convex envelope of a polynomial over a bounded interval can be computed analytically, allowing tight relaxations to be derived directly from the structure of the model. As a result, polynomial KANs keep the benefits of the original architecture while supporting systematic construction of strong convex relaxations.

Despite the favorable structure of polynomial KANs, existing relaxation techniques remain limited. Factorable relaxations, such as McCormick envelopes \cite{mccormick1976computability}, fail to exploit the algebraic structure of polynomials and typically weaken as nonlinear compositions deepen. Exact convexification methods, including algebraic constructions, can provide tight envelopes but are computationally expensive and difficult to update efficiently when variable bounds change \cite{wright1996convex, katsamaki2023exact}. Similarly, B-spline representations, as used by \citet{karia2025deterministic}, require mixed-integer reformulations and rely on discrete enhancements for relaxation strength rather than the continuous polynomial structure. 
Therefore, the current approaches do not provide a practical mechanism for efficiently computing tight convex envelopes for polynomial KANs and motivate the development of specialized methods that leverage their algebraic structure.

To overcome the limitations of existing approaches, we propose the Continuous Graham Scan, a novel algorithm for computing the exact convex and concave envelopes of univariate polynomials. Operating directly on the continuous graph of the polynomial and inspired by the classical Graham Scan for convex hulls \cite{graham1972efficient}, the method avoids both discretization and factorable reformulations. It guarantees the exact convex envelope over any bounded interval, runs in polynomial time, and is applicable to polynomials of arbitrary degree. Because each layer of a polynomial KAN is additively composed of univariate functions, computing these envelopes for the individual components immediately produces tight relaxations for the full network. 
Moreover, for monotonic Generalized Additive Models (GAMs) (a practically relevant subclass of KANs consisting of additive components combined with a monotone, differentiable link function)
the proposed relaxation is exact at the global optimum when minimizing the model \cite{hastie1986generalized, hastie2017generalized}.
Finally, we present computational experiments showing that exploiting the separable structure of KAN layers produces relaxations that are both strong and robust, achieving comparable or better bounds than state-of-the-art solvers.

The remainder of the paper is organized as follows. Section \ref{sec:background} introduces the necessary background, assumptions, and notation used throughout the paper. In Section \ref{sect:graham}, we present the Continuous Graham Scan algorithm in detail, describing its construction, correctness, and computational properties. Section \ref{sec:pkan} shows how the dervied relaxation can be used to relax polynomial KANs and shows its tightness over GAMs. Section \ref{sec:results} reports numerical results, demonstrating the quality and computational efficiency of the proposed relaxations compared to existing state-of-the-art global optimization solvers, and Section \ref{sec:conclusions} concludes with a discussion of the results and potential directions for future research.

%% file: sections/background.tex
\section{Background} \label{sec:background}

In this section, we introduce the technical foundation necessary for understanding the Continuous Graham Scan algorithm. We begin by stating the assumptions under which our analysis holds and introduce the notation used throughout the paper. Next, we motivate the algorithm by examining the structure of convex envelopes of univariate polynomials, highlighting the role of two-point tangent functions in forming the tightest relaxation. Finally, we present the classical Graham Scan for convex hulls and the Continuous Graham Scan to illustrate the similarity between both algorithms. 

\subsection{Assumptions}\label{assump:bit} 

{
 Throughout the paper, we assume real-number arithmetic is precise up to a relative error of at most $\epsilon$. In other words, except for catastrophic cancellations, we treat approximations within $\epsilon$ relative precision as numerically accurate. Our algorithm depends on previous work where accuracy and complexity are described via ``bit complexity'' $b$; in such a case, we establish the relation via $\epsilon := 2^{-b}$.

We further assume that given a degree-$n$ polynomial $p$, differentiating and evaluating $p$ takes $O(n)$ arithmetic operations. This is because differentiating $p$ requires simple manipulation of a length $n$ sequence of coefficients, and evaluating $p$ using the nested form $p(x) = c_0 + x(c_1 + x(c_2+\cdots))$ evidently requires $O(n)$ additions and multiplications.

Finally, throughout the problem, whenever we apply algorithms to univariate polynomials over some interval $[x_L, x_U]$, we assume that $x_U-x_L \in O(1)$. We remark that polynomials defined over other intervals can be rescaled to fit an assumption.
}

\subsection{Notation}\label{assump:note}
We will use the following notation throughout the paper:
\begin{enumerate}[(1)]
    \item { We use $f$ to denote general univariate functions, and $p$ to denote univariate polynomials. }
    \item { For a function $f$ and a compact interval $\cI \subset \R$, we say $g$ is a \textbf{convex relaxation} of $f$ over $\cI$ if $f(x) \ge g(x)$ for all $x \in \cI$ and $g$ is convex on $\cI$. We say that $g_0$ is the \textbf{convex envelope} (tightest convex relaxation) of $f$ over $\cI$ if $g_0$ is convex and $g_0(x)\ge g(x)$ for all $x\in \cI$ and for any convex relaxation $g$ of $f$ on $\cI$.}
    \item For any univariate function $f$, we use $f|_I$ to denote $f$ being restricted to set $I$. Specifically, it holds that $f|_I(x) = f(x)$ if $x\in I$, and $f|_I(x)=+\infty$ otherwise. 
    \item For any univariate function $f$, we use $f|_I^*$ to denote the convex conjugate of $f|_I$, namely {$f|_I^*(s) := \sup_{x\in\R}\{sx-f|_I(x)\} = \sup_{x\in I}\{sx-f(x)\}$}.
    \item Given $S, T \subset \R$, we use $S \le T$ to denote that $s \le t$ for all $(s, t) \in S \times T$.
\end{enumerate}

    


\subsection{Motivation: Convex Envelopes and Tangents}

Before introducing the algorithm, we first describe the structure of convex envelopes of univariate polynomials, which motivates our approach.  The construction~\eqref{graham.env} of $e_{\cI} p$ and~\Cref{thm:graham} in Section~\ref{sec.graham} formalize the following  remark.

\begin{remark}\label{rk:piecewise-bitan}
Let $\cI = [x_L, x_U]$ be a compact interval and $p: \R \to \R$ be univariate polynomial. Then there exist points $x_L = x_0 < x_1 < \cdots < x_m = x_U$, such that the convex envelope of $p$ can be defined piecewise as 
$$e_{\cI} p(x)  := \sum_{i=0}^{m-1} \mathbbm{1}(x \in [x_i, x_{i+1}]) \cdot g_i(x)$$ 
where exactly one of the following must hold for each $g_i$:
\begin{enumerate}
    \item $g_i = p$ and $g_i$ is convex over $[x_i, x_{i+1}]$
    \item $g_i$ is an affine function. 
\end{enumerate}    
Furthermore, if any of the $g_i$ is affine, then it touches the graph of $p$ at the two points $x_i, x_{i+1}$.

\end{remark}

This remark shows that to compute the tight convex envelope of a univariate polynomial $p$ over a closed interval, it  suffices to identify a collection of affine two-point tangents $\{l_i\}_{i \ge 0}$ connecting the convex intervals of $p$. These tangents must satisfy:
\begin{enumerate}[(i)]
    \item Their slopes increase from left to right, ensuring convexity.
    \item They approximate $p(x)$ as closely as possible, guaranteeing tightness of the envelope.
\end{enumerate}


The above remark indicates that to compute the tight convex envelope for a univariate polynomial $p$ over a closed interval, it suffices to identify all the affine 2-point tangent functions mentioned above, which we call $l_1, l_2,\dots, l_r$. Since these $l_i$'s form part of the tight convex envelope of $p$, intuitively they must (i) have increasing slope if arranged from left to right by the $x$ coordinate, since convex functions (therefore tight convex envelopes) have monotone derivative, and (ii) collectively approximate the graph of $p$ as close as possible, since we are computing $p$'s \textit{tightest} possible convex under-estimator. 

Requirements (i) and (ii) motivate an algorithm to compute tight convex envelopes for a univariate polynomial $p$. At a high level, given a polynomial $p$, we (a) first compute the set of maximal intervals $I_0, I_1, \dots, I_k$, on each of which $p$ is convex, then (b) compute a collection of affine functions $l_1, l_2,\dots, l_r$, each matching $p$ at some points in two of the above intervals, and finally (c) enforce the monotonicity of the slopes of the $l_i$'s by ``removing'' intervals $I_j$'s that violates such monotonicity and then recompute some of the tangent lines, using a mechanism that mimics the famous Graham Scan algorithm of \cite{graham1972efficient} which is used to construct the convex hull of 2D points.




\subsection{Continuous Graham Scan: Algorithm and Illustration}


To provide intuition for our proposed Continuous Graham Scan algorithm (\Cref{alg:graham}), it is helpful to first recall the classical Graham Scan \cite{graham1972efficient}, which is widely used to compute the convex hull of a set of 2D points. The key idea in both algorithms is to maintain a stack of line segments with monotonically increasing slopes, removing elements when convexity is violated. Our continuous version extends this principle from discrete points to univariate functions by constructing affine “bitangent” segments that together form the convex envelope of a polynomial.  

The following presentation compares the classical Graham Scan and our continuous variant, highlighting their structural similarity. We first show the classical algorithm in \Cref{alg:og-graham}, followed by our Continuous Graham Scan in \Cref{alg:simp-graham}. To illustrate the mechanisms, we provide step-by-step visualizations for both algorithms applied to a representative polynomial. These figures emphasize how monotonicity in slopes ensures convexity and how problematic segments are skipped or removed, establishing the tight convex envelope in the continuous setting.

\begin{algorithm}[H]
    \caption{Original Graham Scan (Modified from \cite{graham1972efficient}; Alternative Presentation)}
    \begin{algorithmic}\label{alg:og-graham}
        \State{\textbf{Input}: points $(x_0, y_0), (x_1, y_1),\dots, (x_k, y_k)$ sorted by the $x$ coordinate}
        \State{Initialize empty stack $S$}
        \For{$i=0,1,2,...,k-1$}
            \State{$l \gets $ the line through $(x_i,y_i)$ and $(x_{i+1}, y_{i+1})$}
            \While{$S$ not empty and $\texttt{top}(S)$ has slope greater than that of $l$}
            \State{$l_{bad} := S.\texttt{pop}()$}
            \State{$(x_j,y_j) \gets$ the left vertex of $l_{bad}$}
            \State{$l \gets $ the line through $(x_j,y_j)$ and $(x_{i+1}, y_{i+1})$}
            \EndWhile
            \State{$S.\texttt{push(}l\texttt{)}$}
        \EndFor
        \State{\Return{$S$}}
    \end{algorithmic}
\end{algorithm}
\vspace{-1mm}
\begin{algorithm}[H]
    \caption{Continuous Graham Scan (Our Main Algorithm; Alternative Presentation)}
    \begin{algorithmic}\label{alg:simp-graham}
        \State{\textbf{Input}: univariate polynomial $p$ of degree $n$, compact interval $\cI = [x_L,x_U] \subset \R$}
        \State{Initialize empty stack $S$}
        \State{compute the maximal intervals $I_0, I_1, \dots , I_k$ of $\cI$ on which $p$ is convex}
        \For{$i=0,1,...,k-1$}
            \State{$l \gets \bitan_p(I_i, I_{i+1})$}
            \While{$S$ not empty and $\texttt{top}(S)$ has slope greater than that of $l$}
            \State{$l_{bad} := S.\texttt{pop}()$}
            \State{$I_j \gets$ the left convex interval of $l_{bad}$}
            \State{$l \gets \bitan_p(I_j, I_{i+1})$}
            \EndWhile
            \State{$S.\texttt{push(}l\texttt{)}$}
        \EndFor
        \State{\Return{$S$}}
    \end{algorithmic}
\end{algorithm}



\Cref{alg:og-graham} computes the lower half of the convex hull for a set of 2D points by maintaining a stack of line segments and enforcing monotone slopes, removing any segment that would violate convexity. Our Continuous Graham Scan (\Cref{alg:simp-graham}) generalizes this principle to univariate polynomials. Rather than connecting discrete points, it constructs affine “bitangent” segments that are tangent to the polynomial at two points and lie below the function. The algorithm preserves the monotone slope property, discarding segments that would break convexity. The two main operations e.g., identifying convex intervals and computing the bitangent segments, are detailed in the following sections.

To build intuition, the following figures illustrate both the discrete and continuous algorithms on a representative polynomial  
$$p(x) := 1.5x + 1.3x^2 -0.7x^4 + 0.08x^6 -0.0025x^8$$ 
over the interval $[-4.1, 4.4]$. In both cases, the stack of line segments enforces convexity by maintaining monotonically increasing slopes. In the discrete Graham Scan, this is done over individual points, while in the continuous version it is done over convex intervals and their associated bitangent segments.  

The figures highlight the operational analogy between the two algorithms. Orange segments indicate convex intervals of the polynomial, green segments are the selected bitangents that satisfy the monotone slope condition, and red segments represent segments that violate this condition and are therefore removed. Here, the figures show how the continuous algorithm systematically constructs the tight convex envelope, mirroring the logic of the classical Graham Scan while generalizing it to univariate functions. These illustrations provide a concrete, intuitive understanding of the mechanism before we formally define convex intervals and bitangent computation in the next section.

\begin{center}
    \begin{figure*}
        \subfloat[Connecting points 0 and 1.]{\vstretch{.9}{\includegraphics[width=.44\textwidth]{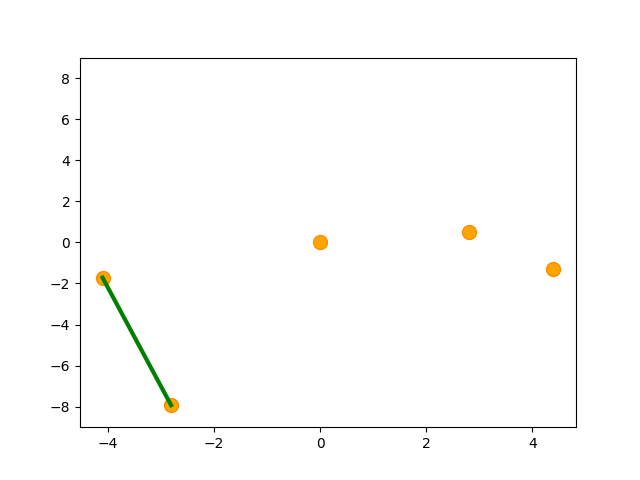}}}\hfill
        \subfloat[Connecting points 1 and 2; the slopes of line segments increases from left to right.]{\vstretch{.9}{\includegraphics[width=.44\textwidth]{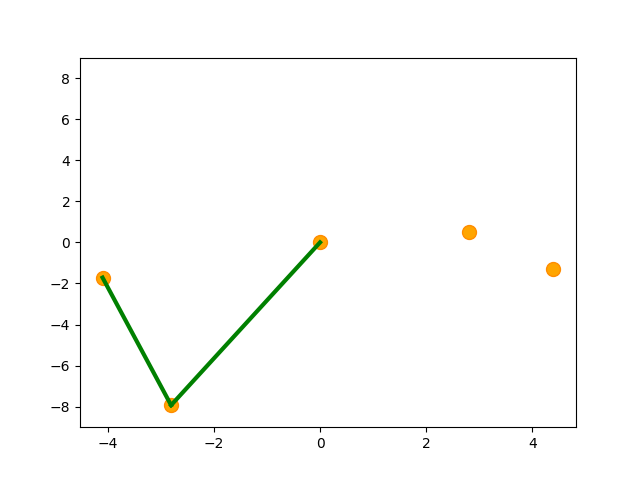}}}\\
        \subfloat[Connecting points 2 and 3 is problematic, as the sequence of line segments is no longer monotone in slope.]{\vstretch{.9}{\includegraphics[width=.44\textwidth]{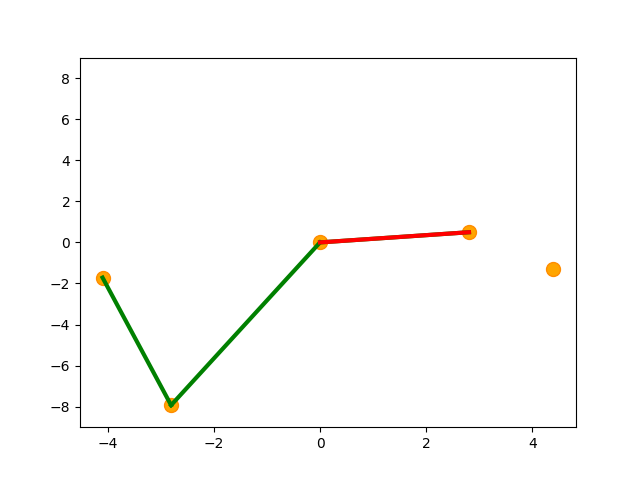}}}\hfill
        \subfloat[We then ``skip'' point 2 and move on, because point 2 will never be part of the convex hull of the points.]{\vstretch{.9}{\includegraphics[width=.44\textwidth]{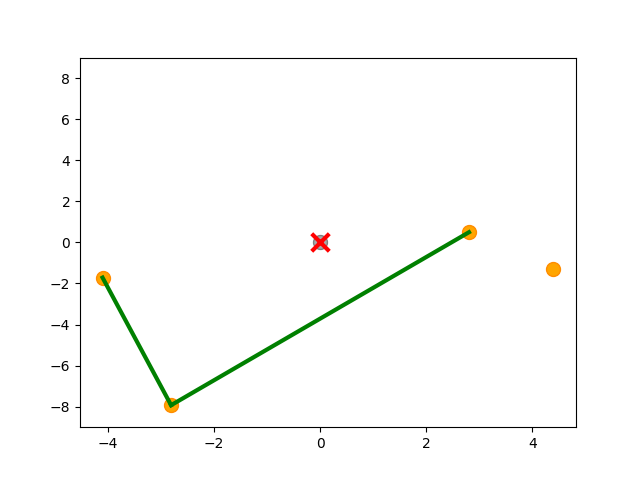}}}\\
        \subfloat[Connecting points 3 and 4 is once again problematic, as the sequence of line segments is no longer monotone in slope.]{\vstretch{.9}{\includegraphics[width=.44\textwidth]{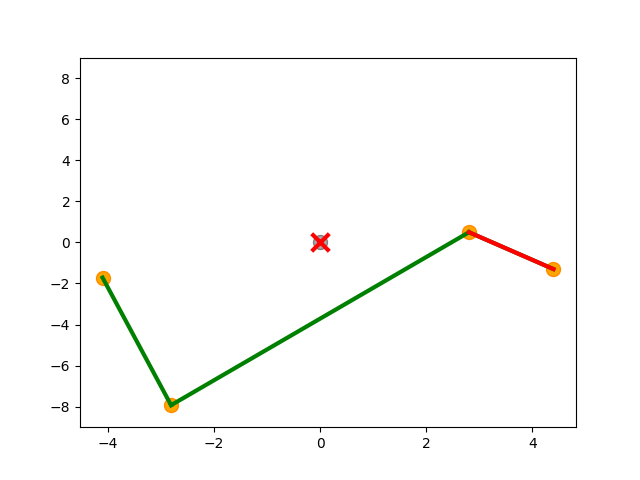}}}\hfill
        \subfloat[We ``skip'' point 3 and move on, because point 3 will never be part of the convex hull of the points]{\vstretch{.9}{\includegraphics[width=.44\textwidth]{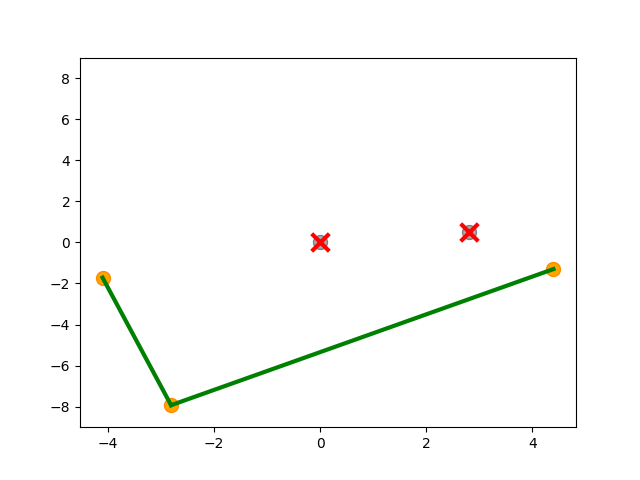}}}\\\\
        \caption{Classical Graham Scan Algorithm (\Cref{alg:og-graham}) for the discrete convex hull. Although originally designed for computing the convex hull of 2D points, here we only show how it computes the ``bottom half'' of the convex hull, to emphasize its relation to our continuous approach for function convex envelopes.}
    \label{fig:placeholder}
    \end{figure*}
\end{center}

\begin{center}
    \begin{figure*}
        \subfloat[Constructing a bitangent between the left endpoint and the first convex interval]{\vstretch{1}{\includegraphics[width=.44\textwidth]{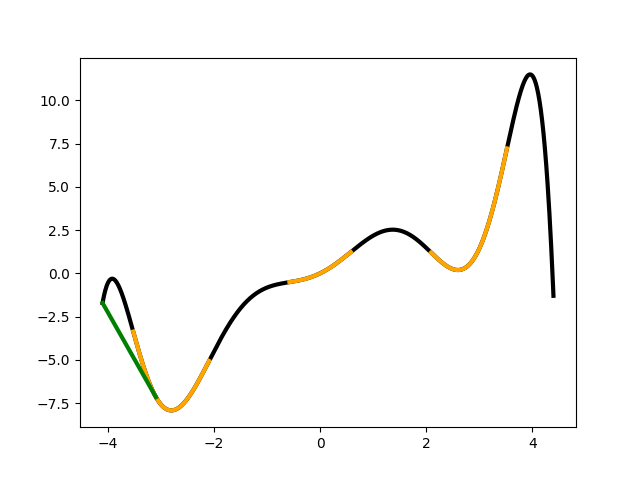}}}\hfill
        \subfloat[Constructing a bitangent between the next two convex intervals]{\vstretch{1}{\includegraphics[width=.44\textwidth]{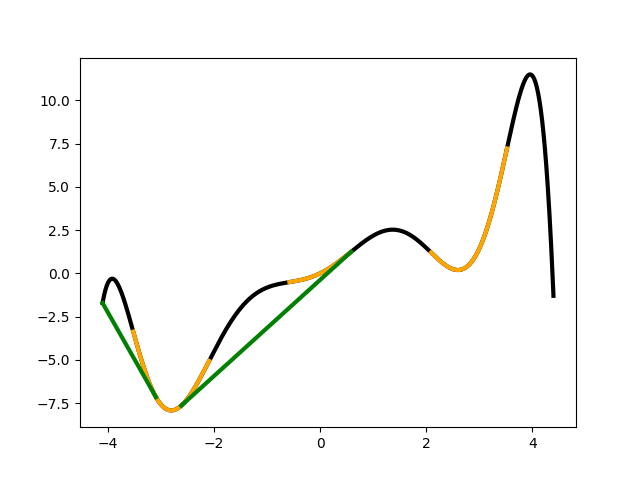}}}\\
        \subfloat[Constructing the next bitangent; notice that the red bitangent is problematic, as the sequence of bitangents is no longer monotone]{\vstretch{1}{\includegraphics[width=.44\textwidth]{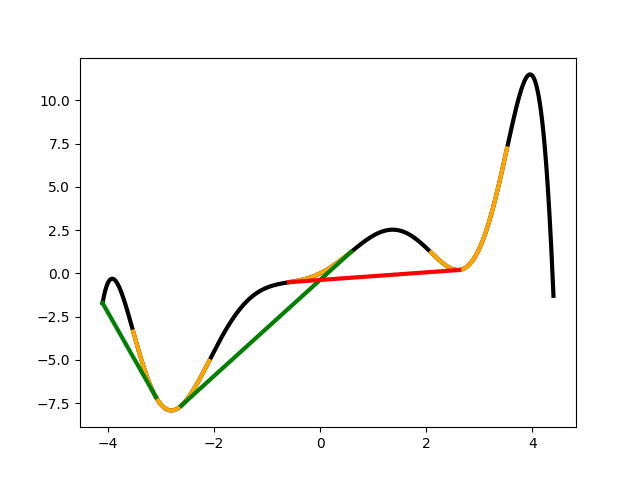}}}\hfill
        \subfloat[We ``skip'' the convex interval between the faulty bitangents and move on, as it will never be part of the convex envelope of $p(x)$]{\vstretch{1}{\includegraphics[width=.44\textwidth]{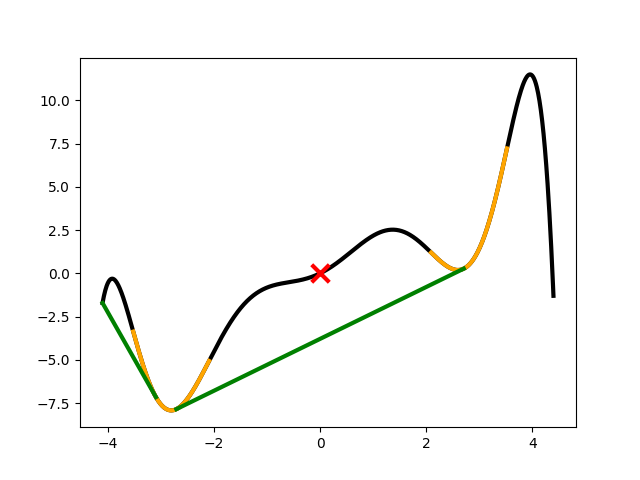}}}\\
        \subfloat[Constructing the next bitangent is once again problematic, as the sequence of bitangents is no longer monotone in slope]{\vstretch{1}{\includegraphics[width=.44\textwidth]{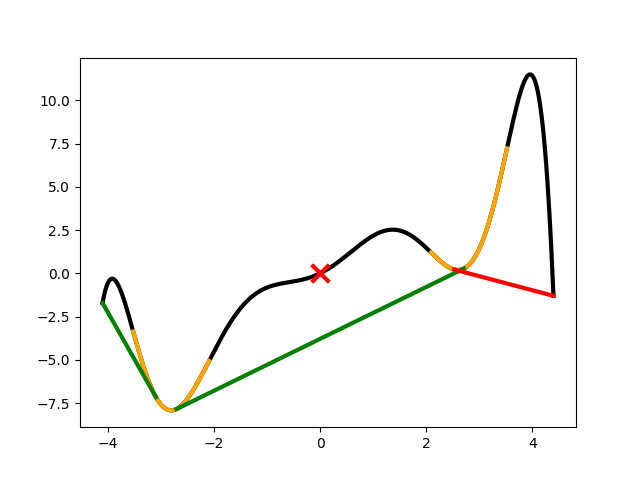}}}\hfill
        \subfloat[We constructed another problematic bitangent (red), as the bitangents are not monotone. ]{\vstretch{1}{\includegraphics[width=.44\textwidth]{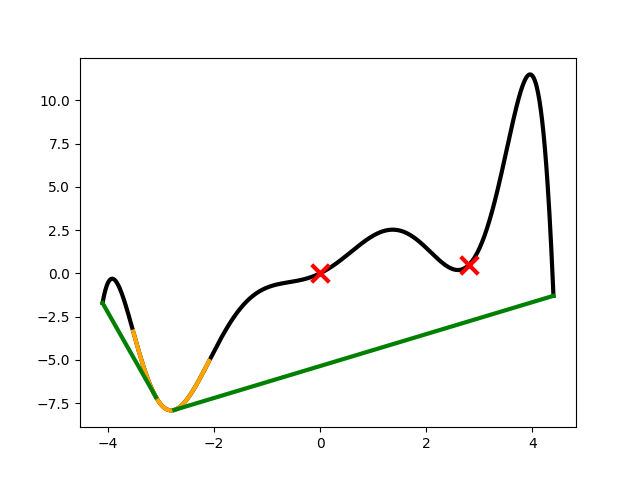}}}\\\\
        \caption{Our Continuous Graham Scan Algorithm (\Cref{alg:simp-graham}) for the convex envelope of univariate functions. Notice that its mechanism is quite comparable to that of the discrete graham scan algorithm.}
    \end{figure*}
\end{center}

%% file: sections/roots.tex
\section{Continuous Graham Scan for Univariate Convex Envelope}\label{sect:graham}

Following the idea from the motivations section, we present our algorithm for computing the convex envelope of a univariate polynomial $p$ over some domain $[x_L, x_R]$. 
We break the problem into three components, which allows us to draw a clear analogy with the classical Graham Scan, making it easier to adapt its geometric intuition to the continuous setting.

\vspace{-2mm}
\begin{enumerate}[(i)]
    \item Identify the maximal intervals $I_0,I_1, \dots, I_k$ in which $p$ is convex; these correspond to the ``2D points'' in the vanilla Graham scan.
    \item Design a subroutine that, given some $I_i, I_j$ above, computes the unique affine function that underestimates $p$ on $I_i \cup I_j$ and intersects the graph of $p$ at exactly one point in each of $I_i, I_j$. This is analogous to connecting two ``2D points'' together.
    \item Modify the vanilla Graham Scan algorithm using (i) and (ii).
    \vspace{-2mm}
\end{enumerate}
We present each of the components in the following three subsections.

\subsection{Computing Convex Intervals of a Univariate Polynomial}\label{sec:ci}


As the first step, we identify intervals where the polynomial is convex, which will play the role of “points” in the discrete Graham Scan.  This is formalized in the following  lemma.

\begin{lemma}\label{lemma.roots}
Fix any univariate polynomial $p: \R \to \R$ of degree $n$ and any compact interval $\cI :=[x_L,x_U]\subset \R$. Then the set 
\[
[x_L,x_U] \setminus \{x \in (x_L,x_U): p''(x) < 0\}
\]
is a finite union of $O(n)$ mutually disjoint closed intervals.  
\end{lemma}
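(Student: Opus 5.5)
The plan is to reduce everything to the finitely many real roots of $q := p''$, a polynomial of degree at most $n-2$, split into two cases according to whether $q$ vanishes identically.

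\emph{Degenerate case.} If $q \equiv 0$ (in particular when $n \le 1$), then $\{x\in(x_L,x_U): p''(x)<0\}$ is empty. The set in question is then all of $[x_L,x_U]$, which is a single closed interval.

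\emph{Generic case: reduction to a sign pattern.} Suppose $q \not\equiv 0$. Then $q$ has at most $n-2$ distinct real roots. Let $r_1<\cdots<r_m$ be those lying in $(x_L,x_U)$, where $m\le n-2$, and set $r_0:=x_L$ and $r_{m+1}:=x_U$. On each open subinterval $(r_i,r_{i+1})$ the polynomial $q$ has no zero, so by the intermediate value theorem it has constant sign there. Hence $N:=\{x\in(x_L,x_U): q(x)<0\}$ is exactly the union of those open intervals $(r_i,r_{i+1})$ on which $q<0$. Consequently
\[
[x_L,x_U]\setminus N = \bigcup_{i:\, q>0 \text{ on } (r_i,r_{i+1})} [r_i,r_{i+1}] \;\cup\; \{r_0,\dots,r_{m+1}\}.
\]

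\emph{Disjoint closed pieces.} Next I would group these pieces into disjoint closed intervals. Adjacent closed pieces $[r_{i-1},r_i]$ and $[r_i,r_{i+1}]$ that share an endpoint merge into one closed interval. Each isolated point $r_i$, meaning one whose neighbouring open intervals both carry $q<0$, is a degenerate closed interval $[r_i,r_i]$. The maximal runs of consecutive indices therefore yield mutually disjoint closed intervals, possibly degenerate. There are at most $m+2\le n$ of them, since each one contains at least one of the points $r_0,\dots,r_{m+1}$ and these points are distinct. This gives the $O(n)$ bound.

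\emph{Main obstacle.} The only delicate point is bookkeeping rather than analysis: endpoints and isolated roots must be handled correctly. For example, a double root of $q$ where $q\le 0$ on both sides produces a single point, and that point must count as a (degenerate) closed interval. One must also check that merging shared endpoints does not break disjointness. Both follow from the observation that each resulting component contains at least one $r_i$ and that distinct components are separated by a nonempty open interval on which $q<0$.
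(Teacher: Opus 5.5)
Your proof is correct and takes essentially the same route as the paper. Both list the roots $r_1<\dots<r_m$ of $p''$ in $(x_L,x_U)$, use the constant sign of $p''$ on each $(r_j,r_{j+1})$, and remove the open pieces where $p''<0$. The paper then writes the remaining set explicitly as $\bigcup_{i=0}^k [r_{j_i+1}, r_{j_{i+1}}]$, whereas you describe it through maximal runs. Your separate treatment of $p''\equiv 0$ and your explicit count $m+2\le n$ are small improvements: the paper tacitly assumes $p''$ has finitely many roots and leaves the $O(n)$ count implicit.
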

\begin{proof}
Since $p''$ is a polynomial of degree $n-2$, it has at most $n-2$ real roots in 
$(x_L,x_U)$. Let $r_1<\cdots<r_m$ be the (possibly empty) ordered set of real roots of $p''$ in $(x_L,x_U)$ and let $r_0:=x_L$ and $r_{m+1}:=x_U$. Then
\[
\{x \in (x_L,x_U): p''(x) \ne 0\} = (r_0,r_{m+1}) \setminus \{r_1,\dots,r_{m}\} = \bigcup_{j=0}^m (r_j,r_{j+1}).
\]
The latter set is evidently a union of finitely many disjoint open subintervals of  $(x_L,x_U) = (r_0,r_{m+1})$.  Since the sign of $p''$ cannot change in any of the open intervals $(r_j,r_{j+1})$, each of the following two open sets is also a union of finitely many disjoint open intervals of the form $(r_j,r_{j+1})$ for some $j=0,\dots,m$: 
\[
\{x \in (x_L,x_U): p''(x) < 0\} \text{ and } \{x \in (x_L,x_U): p''(x) > 0\}.
\]
In particular,
\begin{equation}\label{eq.concave}
\{x \in (x_L,x_U): p''(x) < 0\} = \bigcup_{i=1}^k (r_{j_i},r_{j_i+1})
\end{equation}
for some $j_1,j_2,\dots,j_k \in \{0,1,\dots,m\}$. By convention, we take $k=0$ when 
$\{x \in (x_L,x_U) \ | \ p''(x) < 0\} = \emptyset$ so that the union in the above right-hand-side is empty.  Therefore by taking $j_0 = -1$ and $j_{k+1} = m+1$ we get
\begin{equation}\label{eq.convex}
[x_L,x_U] \setminus \{x \in (x_L,x_U): p''(x) < 0\} = [r_0,r_{m+1}]\setminus
\bigcup_{i=1}^k (r_{j_i},r_{j_i+1}) = 
\bigcup_{i=0}^k [r_{j_i+1},r_{j_{i+1}}].
\end{equation}
Observe that the latter set is $[x_L,x_U] = [r_0,r_{m+1}]$, as it should be, when $\{x \in (x_L,x_U) \ | \ p''(x) < 0\} = \emptyset$ as in that case $k=0$.
\end{proof}

Now we formalize the concept of convex intervals.

\begin{defn}\label{def:ci}
    For any degree $n$ univariate polynomial $p$ and a compact interval $\cI := [x_L, x_U] \subset \R$, we define the set of \textbf{Convex Intervals} of $p$ with respect to $\cI$, which is denoted as $CI_\cI(p)$, to be the set of mutually disjoint closed intervals $\{I_0, I_1, \dots, I_k\subset \cI\}$, where $I_i \le I_{i+1}$ for $i=0,\dots,k-1$ and $
[x_L,x_U] \setminus \{x \in (x_L,x_U): p''(x) < 0\} = I_0\cup I_1 \cup\cdots \cup I_k$ whose existence is guaranteed by Lemma~\ref{lemma.roots}.
\end{defn}

    

The set $CI_\cI(p)=\{I_0, I_1, \dots, I_k\}$ contains all the maximal closed subintervals of $[x_L,x_U]$ where $p$ is convex.  It will always holds that $x_L \in I_0$ and $x_U \in I_k$ which will be key to ensure the correctness of our Continuous Graham Scan algorithm.  We note that it is possible to have $I_0 = [x_L,x_L] = \{x_L\}$ and also $I_k=[x_U,x_U] = \{x_U\}$.

The computation of convex intervals of a univariate polynomial $p$ requires finding all { real roots of $\frac{d^2}{dx^2}p(x)$, a polynomial of degree $n-2$}, which can be approximated to a { relative error of $\epsilon$ (ie. our assumed machine precision in \Cref{assump:bit}) in $O(n^3 + n \log^2 n \log b) = O(n^3 + n \log^2 n \log \log \epsilon^{-1})$ time} by solving the eigen-problem for the companion matrix of $p$, according to  \citet{pan1999complexity}. With the real roots of a polynomial $p$ already computed, we compute the convex intervals of $p$ as shown in \Cref{alg:ci}. 

\begin{algorithm}[H]
    \caption{Convex Interval Computation}
    \begin{algorithmic}\label{alg:ci}
        \State{\textbf{Input}: univariate polynomial $p$ of degree $n$, closed interval $[x_L, x_U] \subset \R$}, {real roots of $p''$}
        \State{$R \gets \{ \text{real roots of } p'' \text{ within }(x_L, x_U)\} \cup \{x_L, x_U\}$}
        \State{let $x_L = r_0 < r_1 < \cdots < r_{m+1} = x_U$ be the elements of $R$ in ascending order}
        \State{$k \gets 0$ and $j_k\gets -1$}
        \For{$j=0,1,2,\dots,m$}
        \If{$p''(\frac{r_j + r_{j+1}}{2}) < 0$}
        \State{$k\gets k+1$ and $j_k\gets j$}   
        \EndIf
        \EndFor
        \State \Return{$C=\{[r_0,r_{j_1}],[r_{j_1+1},r_{j_2}],\dots,[r_{j_k+1},r_{m+1}]\}$}
    \end{algorithmic}
\end{algorithm}

The following theorem establishes both the correctness and efficiency of \Cref{alg:ci}, showing that it reliably identifies all convex intervals in polynomial time. 

\begin{theorem}\label{thm:ci}
    {Let $p$ be a univariate polynomial of degree $n$, let $\cI \subset \R$ be a closed interval. When given the real roots of $p$, \Cref{alg:ci} terminates in $O(n^2)$ arithmetic operations and returns $CI_\cI(p)$. }
\end{theorem}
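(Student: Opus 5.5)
The argument has two parts: a direct operation count for the complexity bound, and, for correctness, a check that Algorithm~\ref{alg:ci} computes exactly the index set $j_1<\dots<j_k$ and the union~\eqref{eq.convex} from the proof of Lemma~\ref{lemma.roots}. The statement says ``real roots of $p$''; we read this as the real roots of $p''$, which is what Algorithm~\ref{alg:ci} takes as input.

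\textbf{Complexity.} $p''$ has degree $n-2$, so it has at most $n-2$ real roots.
\begin{itemize}
\item Filtering the roots to $(x_L,x_U)$ and adding the endpoints gives $m+2\le n$ points, at a cost of $O(n)$.
\item Sorting these points costs $O(n\log n)$ comparisons, or $O(n^2)$ with a naive sort.
\item Forming $p''$ from the coefficients of $p$ costs $O(n)$ by the assumptions of Section~\ref{assump:bit}.
\item The loop runs $m+1=O(n)$ times. Each iteration evaluates $p''$ at one midpoint in $O(n)$ operations (Horner's scheme), so the loop costs $O(n^2)$.
\item Assembling the output list of at most $m+2$ intervals costs $O(n)$.
\end{itemize}
The total is $O(n^2)$.

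\textbf{Correctness.} Let $r_0<\dots<r_{m+1}$ be the sorted points, as in the proof of Lemma~\ref{lemma.roots}. On each open interval $(r_j,r_{j+1})$, the polynomial $p''$ has no root, so by continuity its sign is constant there. That sign is therefore the sign at the midpoint $(r_j+r_{j+1})/2$. Hence the test in the loop selects exactly those $j$ with $(r_j,r_{j+1})\subset\{x\in(x_L,x_U):p''(x)<0\}$, and no other $j$.

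Since the loop scans $j$ in increasing order, the recorded indices $j_1<\dots<j_k$ are the ones appearing in~\eqref{eq.concave}. With the conventions $j_0=-1$ and $j_{k+1}=m+1$, the returned family $\{[r_{j_i+1},r_{j_{i+1}}]\}_{i=0}^k$ is the decomposition in~\eqref{eq.convex}. These intervals are closed and lie in $\cI$. They are ordered, $I_i\le I_{i+1}$, because $r_{j_{i+1}}<r_{j_{i+1}+1}$.

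They are also mutually disjoint. Consecutive intervals are separated by the nonempty open interval $(r_{j_{i+1}},r_{j_{i+1}+1})$, on which $p''<0$. So the output matches Definition~\ref{def:ci}, i.e.\ it is $CI_\cI(p)$.

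\textbf{Main obstacle.} The one delicate point is two adjacent negative intervals, $j_{i+1}=j_i+1$. Then $[r_{j_i+1},r_{j_{i+1}}]$ degenerates to the single point $\{r_{j_i+1}\}$, where $p''=0$ between two concave pieces. This point genuinely belongs to the set $[x_L,x_U]\setminus\{p''<0\}$, so keeping it as a degenerate closed interval is consistent with the definition. The paper already allows degenerate intervals such as $I_0=\{x_L\}$.

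We also need that no returned interval is empty. This holds because $r_{j_i+1}\le r_{j_{i+1}}$ whenever $j_i<j_{i+1}$, which is always the case for the recorded indices.

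Finally, roots of $p''$ are computed only to relative precision $\epsilon$. Under the standing assumption of Section~\ref{assump:bit}, these approximations are treated as exact, so sorting and sign-testing introduce no further error.
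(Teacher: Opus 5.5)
Your proposal is correct and follows essentially the same route as the paper. The cost is bounded by sorting the $O(n)$ points plus $m+1\le n$ loop iterations, each costing one $O(n)$ evaluation of $p''$. Correctness comes from the loop recovering exactly the indices $j_1<\dots<j_k$ of \eqref{eq.concave}, so that the output is the decomposition \eqref{eq.convex} from Lemma~\ref{lemma.roots}. You only make explicit what the paper leaves implicit: sign constancy of $p''$ on each $(r_j,r_{j+1})$ justifying the midpoint test, and the ordering, disjointness, and possibly degenerate singleton intervals. Your reading of ``real roots of $p$'' as the roots of $p''$ matches the algorithm's actual input.
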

\begin{proof}
\Cref{alg:ci}'s runtime consists of two parts, sorting the $m \le n$ real roots, which takes $O(n\log n)$, and computing $j_1,\dots,j_k$, which requires $m+1 \le n$ iterations of the for loop. Within each iteration, there is one evaluation of $p''$ plus a few constant-time steps. Since evaluation of $p''$ takes $O(n)$ according to our assumption in \Cref{assump:bit}, the overall runtime is $O(n^2)$.  
The construction of the for loop guarantees that  $j_1,\dots,j_k$ are such that~\eqref{eq.concave} holds.  Thus $C$ returns the intervals in~\eqref{eq.convex}, which is precisely $CI_\cI(p)$ as per \Cref{def:ci}.
\end{proof}

%% file: sections/bitan_no_mark.tex
\subsection{Computation of Unique Bitangents for Polynomials}\label{pf:bitan}

In this section, we present and prove the correctness of \Cref{alg:bisect}, which is an algorithm that, given a polynomial $p$, and two distinct convex intervals $I_1, I_2$ in some $CI_\cI(p)$ (defined in \Cref{def:ci}), computes a \textit{bitangent} of $p$ through $I_1, I_2$. 
Informally, a bitangent is an affine function that lies below $p$ and touches each of the intervals $I_1$ and $I_2$. A formal definition is given below.

\begin{defn}\label{def:bitan}
   Let  $p$ be a univariate polynomial, and  $I_1, I_2 \subseteq \R$ be disjoint closed intervals.  The  affine function $\bitan_p(I_1,I_2)$ is the \textbf{bitangent} associated with $p, I_1, I_2$ if $\bitan_p(I_1, I_2)(x) \le p(x)$ for all $x \in I_1 \cup I_2$, and $p-\bitan_p(I_1, I_2)$ has at least one root in each of $I_1$ and $I_2$.
\end{defn}
\Cref{lemma:binary} below implies that $\bitan_p(I_1, I_2)$  always exists and is unique. 

In a relevant work from INRIA \cite{katsamaki2023exact}, they compute the exact convex hull of rational parametric curves via finding bitangent lines by solving polynomial equations. 
This observation highlights that, unlike general methods which may need to solve high-degree polynomial equations with multiple roots, our problem structure guarantees a single solution per convex interval pair, making the computation more tractable.
However, since our algorithm mimics that of Graham Scan in \cite{graham1972efficient}, we only need to compute bitangents with respect to specific pairs of convex intervals. As per \Cref{cor.bitangent}, each such pair of convex intervals admit a unique bitangent; therefore intuitively, identifying such unique solution is easier than generally solving for polynomial equations that may have multiple roots.

{ The following lemma shows that given two disjoint closed intervals $I_1, I_2$ and a univariate polynomial $p(x)$, the bitangent $\bitan_p(I_1,I_2)$ can be efficiently computed by solving the equation $p|_{I_1}^*(s)=p|_{I_2}^*(s)$ via binary search as detailed in
\Cref{alg:bisect}.  See also \Cref{fig:bitangent-conju} for an illustration.

\begin{lemma}\label{lemma:binary} Let  $p$ be a degree $n$ univariate polynomial, and  $I_1, I_2 \subseteq \R$ be disjoint closed intervals such that $I_1 \le I_2$.
\begin{itemize}
\item[(a)] Let $ I := \conv(I_1\cup I_2)$, and suppose  $m \le \min_{x\in I} p'(x)$ and $M \ge \max_{x\in I} p'(x)$.  Then $p|_{I_1}^*(m) \ge p|_{I_2}^*(m)$ and $p|_{I_1}^*(M) \le p|_{I_2}^*(M)$.  In particular, $p|_{I_1}^*(s_0)=p|_{I_2}^*(s_0)$ for some $s_0 \in [m,M].$ 
\item[(b)] Suppose $s_0\in\R$ is such that $p|_{I_1}^*(s_0)=p|_{I_2}^*(s_0)$.  
Then the affine function
\[
g(x):= s_0x - p_{I_1}^*(s_0) =s_0x - p_{I_2}^*(s_0)
\]
satisfies the conditions of Definition~\ref{def:bitan}, that is, $g(x) \le p(x)$ for all $x\in I_1\cup I_2$  and $p-g$ has at least one root in each of $I_1$ and $I_2$.

\item[(c)] Suppose $s_0\in\R$ is such that $p|_{I_1}^*(s_0)=p|_{I_2}^*(s_0)$. Then 
$p|_{I_1}^*(s)<p|_{I_2}^*(s)$ for all $s > s_0$ and $p|_{I_2}^*(s)< p|_{I_1}^*(s)$ for all $s < s_0$. 

\end{itemize}
\end{lemma}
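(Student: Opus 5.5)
The plan is to work directly from the definition $p|_{I}^*(s)=\max_{x\in I}\{sx-p(x)\}$. Write $I_1=[a_1,b_1]$ and $I_2=[a_2,b_2]$. Since the intervals are disjoint and $I_1\le I_2$, we have $b_1<a_2$ strictly, and this strict gap is the engine of the whole argument. I will treat the intervals as compact, as they are in the intended application to $CI_\cI(p)$. Then each sup is attained, because $x\mapsto sx-p(x)$ is continuous. Moreover, each conjugate is a finite maximum over a compact set of functions affine in $s$, hence finite, convex and continuous in $s$.

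For (a), I fix $s=m$ and note that $\frac{d}{dx}(mx-p(x))=m-p'(x)\le 0$ on $I=\conv(I_1\cup I_2)=[a_1,b_2]$. So $x\mapsto mx-p(x)$ is nonincreasing on $I$. Hence for every $x\in I_2$ we get $mx-p(x)\le mb_1-p(b_1)\le p|_{I_1}^*(m)$, and taking the max over $I_2$ gives $p|_{I_2}^*(m)\le p|_{I_1}^*(m)$. The case $s=M$ is symmetric: the function is now nondecreasing, so every value on $I_1$ is bounded by the value at $a_2\in I_2$. Continuity of $h(s):=p|_{I_2}^*(s)-p|_{I_1}^*(s)$, together with $h(m)\le 0\le h(M)$, then gives $s_0\in[m,M]$ with $h(s_0)=0$ by the intermediate value theorem.

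For (b), the inequality $g\le p$ on $I_j$ is just Fenchel--Young. For $x\in I_j$ we have $s_0x-p(x)\le p|_{I_j}^*(s_0)$, that is, $g(x)=s_0x-p|_{I_j}^*(s_0)\le p(x)$, and the two expressions for $g$ coincide by hypothesis. For the roots, choose a maximizer $x_j\in I_j$ attaining $p|_{I_j}^*(s_0)$, which exists by compactness. Then $g(x_j)=p(x_j)$, so $p-g$ vanishes at $x_1\in I_1$ and at $x_2\in I_2$.

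Part (c) is the step needing care, since strictness must come from somewhere; I will get it from $b_1<a_2$ rather than from any convexity. Let $x_2\in I_2$ attain $p|_{I_2}^*(s_0)$, and for $s>s_0$ let $y\in I_1$ attain $p|_{I_1}^*(s)$. Then
\[
p|_{I_1}^*(s)=sy-p(y)=(s_0y-p(y))+(s-s_0)y\le p|_{I_1}^*(s_0)+(s-s_0)b_1 .
\]
On the other side,
\[
p|_{I_2}^*(s)\ge sx_2-p(x_2)=p|_{I_2}^*(s_0)+(s-s_0)x_2\ge p|_{I_2}^*(s_0)+(s-s_0)a_2 .
\]
Since $p|_{I_1}^*(s_0)=p|_{I_2}^*(s_0)$, $s-s_0>0$ and $b_1<a_2$, this yields $p|_{I_1}^*(s)<p|_{I_2}^*(s)$.

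For $s<s_0$ the same argument applies with the roles swapped. Take $y\in I_2$ attaining $p|_{I_2}^*(s)$; it satisfies $y\ge a_2$, and multiplying by the negative factor $s-s_0$ bounds $p|_{I_2}^*(s)$ above by $p|_{I_2}^*(s_0)+(s-s_0)a_2$. Take $x_1\in I_1$ attaining $p|_{I_1}^*(s_0)$; it satisfies $x_1\le b_1$, which bounds $p|_{I_1}^*(s)$ below by $p|_{I_1}^*(s_0)+(s-s_0)b_1$. Because $s-s_0<0$ and $b_1<a_2$, we get $(s-s_0)b_1>(s-s_0)a_2$, and hence the strict inequality $p|_{I_2}^*(s)<p|_{I_1}^*(s)$.

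As a byproduct, (c) shows that $s_0$ is unique, which is the uniqueness of $\bitan_p(I_1,I_2)$ asserted before the lemma.
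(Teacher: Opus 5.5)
Your proof is correct and follows essentially the same route as the paper. Part (a) uses the sign of $m-p'$ and $M-p'$ on $\conv(I_1\cup I_2)$, which the paper phrases via the mean value theorem on difference quotients, followed by the intermediate value theorem; (b) uses Fenchel--Young with maximizers; and (c) compares a maximizer for slope $s$ in one interval against a maximizer for slope $s_0$ in the other, with strictness coming from $\max I_1<\min I_2$. Your explicit compactness assumption is the one the paper uses implicitly, and your inequality at $M$ has the correct direction, whereas the paper's display there has a typo.
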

\begin{proof}

\begin{itemize}
\item[(a)] Since $I_1 \le I_2$, the choice of $m,M$ and the mean value theorem imply that for all $x\in I_1$ and $y\in I_2$
\[
m \le \frac{p(y)-p(x)}{y-x} \le M,
\]
or equivalently
\[
mx - p(x) \ge my - p(y) \text{ and } Mx - p(x) \le My - p(y).
\]
Since the above two inequalities hold for all $x\in I_1$ and $y\in I_2$ it follows that
\[
p_{I_1}^*(m) = \sup\{mx - p(x): x\in I_1\} \ge \sup\{my - p(y): y\in I_2\} = p_{I_2}^*(m)
\]
and
\[
p_{I_1}^*(M) = \sup\{Mx - p(x): x\in I_1\} \ge \sup\{My - p(y): y\in I_2\} = p_{I_2}^*(M).
\]
Furthermore, since both $p_{I_1}^*$ and $p_{I_2}^*$ are continuous functions, the intermediate value theorem implies that $p|_{I_1}^*(s_0)=p|_{I_2}^*(s_0)$ for some $s_0 \in [m,M].$
\item[(b)] The construction of $g$ readily implies that $g(x) = s_0 x - p_{I_1}^*(s_0) \le s_0 x - (s_0 x - p(x)) = p(x)$ for all $x\in I_1$ and 
$g(x) = s_0 x - p_{I_2}^*(s_0) \le s_0 x - (s_0 x - p(x)) = p(x)$ for all $x\in I_2$.
In other words, $g(x) \le p(x)$  for all $x\in I_1\cup I_2$.  Furthermore, for $x_1:=\argmin\{s_0x-p(x): x\in I_1\}$ 
\[
g(x_1) = s_0 x_1 - p_{I_1}^*(s_0) = p(x_1).
\]
Similarly, $g(x_2) = p(x_2)$ for $x_2:=\argmin\{s_0x-p(x): x\in I_2\}$.  Thus $p-g$ has at least one root in each of $I_1$ and $I_2$.

\item[(c)] Let $x_1\in I_1, x_2\in I_2$ and $g$ be as in part (b).

First, consider the case $s>s_0$.  Let $z_1:=\argmax\{sx - p(x): x\in I_1\}$. Since $g$ is affine with slope $s_0$, $z_1\in I_1, x_2\in I_2$, and $I_1\le I_2$ we have
\[
p(x_2) = g(x_2) = g(z_1) + s_0(x_2-z_1) < p(z_1) + s(x_2 - z_1).
\]
Thus
\[
p_{I_1}^*(s) = sz_1 - p(z_1) < sx_2 - p(x_2) \le \sup\{sx-p(x): x\in I_2 \} = p_{I_2}^*(s).
\]

Second, consider the case $s<s_0$.  Let $z_2:=\argmax\{sx - p(x): x\in I_2\}$. Since $g$ is affine with slope $s_0$, $x_1\in I_1, z_2\in I_2$, and $I_1\le I_2$ we have
\[
p(x_1) = g(x_1) = g(z_2) + s_0(x_1-z_2) < p(z_2) + s(x_1 - z_2).
\]
Thus
\[
p_{I_2}^*(s) = sz_2 - p(z_2) < sx_1 - p(x_1) \le \sup\{sx-p(x): x\in I_1 \} = p_{I_1}^*(s).
\]

\end{itemize}
\end{proof}

\begin{corollary}\label{cor.bitangent}
Suppose $p$ is a univariate polynomial, $I_1, I_2\subseteq \R$ are disjoint compact intervals such that $I_1\le I_2$ and let $s_0$ the the unique solution to $p_{I_1}^*(s) = p_{I_2}^*(s)$.  Then $\bitan_p(I_1, I_2)$ is the following uniquely defined affine function:
\[
\bitan_p(I_1, I_2)(x):=p(x) + p_{I_1}^*(s_0)=p(x) + p_{I_2}^*(s_0).
\] 
Furthermore, if $p$ is concave on $\conv(I_1\cup I_2) \setminus (I_1\cup I_2)$ then $\bitan_p(I_1,I_2) \le p$ on 
$\conv(I_1\cup I_2)$. 
\end{corollary}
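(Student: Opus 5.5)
The plan is to prove the three claims in turn: existence and uniqueness of $s_0$, uniqueness of the bitangent, and the domination on the gap. All three follow almost directly from \Cref{lemma:binary}.

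The displayed formula should be read as the affine function $g(x)=s_0x-p_{I_1}^*(s_0)=s_0x-p_{I_2}^*(s_0)$ of \Cref{lemma:binary}(b). As printed, $p(x)+p_{I_1}^*(s_0)$ is not affine, so I take it to be a typo for this $g$.

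\emph{Existence and uniqueness of $s_0$.} Set $I:=\conv(I_1\cup I_2)$, which is compact. Take $m:=\min_{x\in I}p'(x)$ and $M:=\max_{x\in I}p'(x)$; these are finite since $p'$ is continuous. Then \Cref{lemma:binary}(a) gives a solution $s_0\in[m,M]$ of $p_{I_1}^*(s)=p_{I_2}^*(s)$. For uniqueness, suppose $s_0<s_1$ are both solutions. Applying \Cref{lemma:binary}(c) at $s_0$ gives $p_{I_1}^*(s_1)<p_{I_2}^*(s_1)$, which contradicts $s_1$ being a solution. So $s_0$ is unique.

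\emph{Uniqueness of the bitangent.} By \Cref{lemma:binary}(b), $g$ satisfies \Cref{def:bitan}. Let $h(x)=sx+c$ be any affine function satisfying \Cref{def:bitan}, with $h\le p$ on $I_1\cup I_2$ and $h(x_1)=p(x_1)$, $h(x_2)=p(x_2)$ for some $x_1\in I_1$, $x_2\in I_2$.
\begin{itemize}
\item From $h\le p$ on $I_1$ we get $sx-p(x)\le -c$ for all $x\in I_1$, with equality at $x_1$. Hence $p_{I_1}^*(s)=-c$.
\item The same argument on $I_2$ gives $p_{I_2}^*(s)=-c$.
\end{itemize}
So $s$ solves $p_{I_1}^*(s)=p_{I_2}^*(s)$, which forces $s=s_0$ by the previous step. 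It then follows that $c=-p_{I_1}^*(s_0)$, so $h=g$.

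\emph{Domination on $\conv(I_1\cup I_2)$.} Since $I_1\le I_2$ and the intervals are disjoint, write $a:=\max I_1<b:=\min I_2$. Then $\conv(I_1\cup I_2)\setminus(I_1\cup I_2)=(a,b)$. On $I_1\cup I_2$ we already have $g\le p$ by \Cref{lemma:binary}(b), so it remains to handle $(a,b)$.
\begin{itemize}
\item The function $q:=p-g$ differs from $p$ by an affine term, so it is concave on $(a,b)$ by hypothesis.
\item By continuity of $q$, concavity extends to the closed interval $[a,b]$.
\item Since $a\in I_1$ and $b\in I_2$, we have $q(a)\ge 0$ and $q(b)\ge0$.
\end{itemize}
Concavity then gives, for $x=(1-t)a+tb$ with $t\in[0,1]$,
\[
q(x)\ge (1-t)q(a)+t\,q(b)\ge 0.
\]
Hence $g\le p$ on all of $\conv(I_1\cup I_2)$.

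The only mildly delicate points are the interpretation of the displayed formula and extending concavity from the open gap to its closure. Neither is a real obstacle; the substantive work is already contained in \Cref{lemma:binary}.
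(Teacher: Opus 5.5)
Your proposal is correct and follows the paper's proof: uniqueness of $s_0$ and of the bitangent from \Cref{lemma:binary}, then a convex-combination/concavity argument on the gap $(\max I_1,\min I_2)$ using $g\le p$ at its two endpoints. You are more careful than the paper on three points it leaves implicit. You show why any affine $h$ satisfying \Cref{def:bitan} must have slope solving $p_{I_1}^*(s)=p_{I_2}^*(s)$. You read the displayed formula as $s_0x-p_{I_1}^*(s_0)$. And you justify applying concavity at the endpoints $u_1,u_2$, which lie outside the open gap, via continuity.
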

\begin{proof}
The uniqueness of $s_0$ and of $\bitan_p(I_1, I_2)$ readily follows from~\Cref{lemma:binary}.  Suppose $p$ is concave on $\conv(I_1\cup I_2) \setminus (I_1\cup I_2)$.  Observe that 
\[\conv(I_1\cup I_2) \setminus (I_1\cup I_2) = (u_1, u_2)\]
for $u_1 = \max I_1$ and $u_2 = \min I_2$.
To show the second statement it suffices to show that $\bitan_p(I_1, I_2)(x)\le p(x)$ for $x\in (u_1, u_2)$ since $\bitan_p(I_1, I_2)\le p$ on $I_1\cup I_2$.  Indeed, for
$x\in (u_1, u_2)$ we have 
\[
x = \frac{x-u_1}{u_2-u_1} \cdot u_2 + \frac{u_2-x}{u_2-u_1} \cdot u_1.
\] 
Hence the concavity of $p$ on $(u_1, u_2)$ and the fact that $\bitan_p(I_1, I_2)$ is affine and $\bitan_p(I_1, I_2)\le p$ on $I_1\cup I_2$ imply that
\[
\begin{aligned}
\bitan_p(I_1, I_2)(x) &= \frac{x-u_1}{u_2-u_1} \cdot 
\bitan_p(I_1, I_2)(u_2) + \frac{u_2-x}{u_2-u_1} \cdot \bitan_p(I_1, I_2)(u_1) 
\\
&\le \frac{x-u_1}{u_2-u_1} \cdot 
p(u_2) + \frac{u_2-x}{u_2-u_1} \cdot p(u_1) 
\\&\le p(x).
\end{aligned}
\]
\end{proof}

\Cref{lemma:binary} also suggests a natural binary procedure to find $s_0$ as detailed in \Cref{alg:bisect}.

We emphasize that efficiently evaluating $p|_I^*$, the convex conjugate of $p$ restricted to $I$ (defined in \Cref{assump:note}), is important for computing the bitangent $\bitan_p(I_1, I_2)$, because the sign of $p|_{I_1}^*(s)-p|_{I_2}^*(s)$ can be used to binary-search for the slope of $\bitan_p(I_1, I_2)$. 
As formalized in \Cref{lem:conj}, evaluating $p|_I^*(s)$ can be done efficiently.
The correctness of such binary-search approach is formally shown in \Cref{thm:bitangent}.

\begin{figure}
\begin{center}
    \includegraphics[width=16cm]{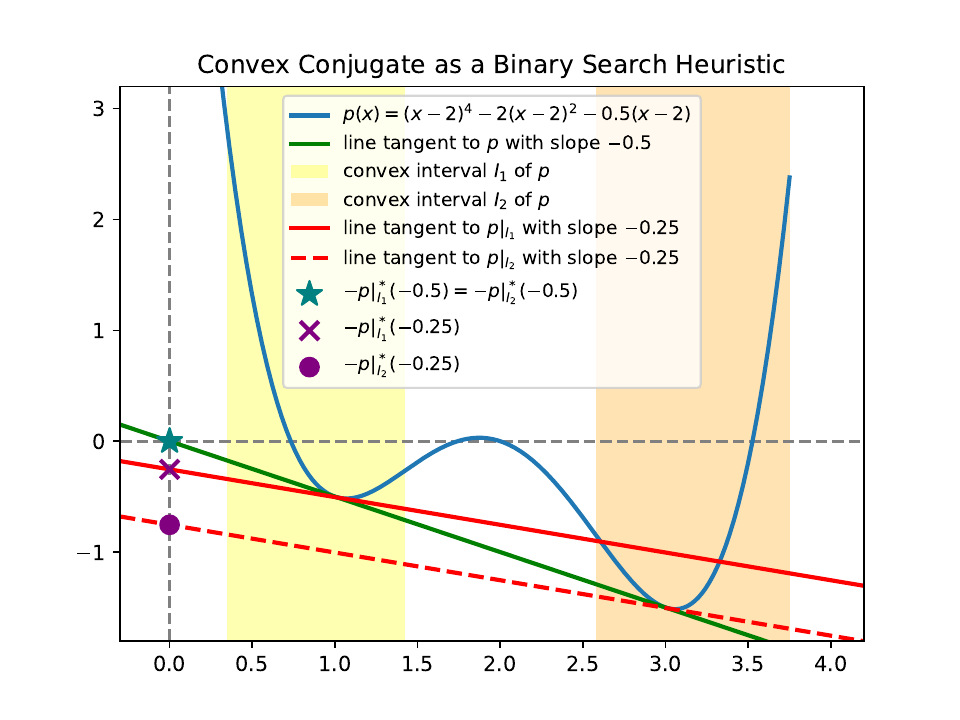}
    \caption{Illustration of using Convex Conjugate to binary search for the correct slope of bitangents: we use $p(x) := (x-2)^4 - 2(x-2)^2 - 0.5(x-2)$ restricted to the compact interval $[0.25, 3.75]$. In this case, its Convex Intervals are approximately $\{I_1, I_2\} := \{[0.25, 1.423], [2.577, 3.75]\}$ and $p|_{I_1}^*(s_0) = p|_{I_2}^*(s_0) = 0.0$ for $s_0=-0.5$. Thus $\bitan_p(I_1, I_2)(x) = -0.5x$. On the other hand, for  $s = -0.25 > -0.5 = s_0$ we have $ -p|_{I_1}^*(s) \approx -0.254 >  -0.754\approx -p|_{I_2}^*(s).$}
    \label{fig:bitangent-conju}
\end{center}
\end{figure}
}

\begin{algorithm}[H]
    \caption{Bisection Algorithm for Computing Bitangents }
    \begin{algorithmic}\label{alg:bisect}
        \State{\textbf{Input}: univariate polynomial $p$ of degree $n$, convex intervals $I_l, I_r$ of $p$}
        \State{Find $m <0$ and $ M > 0$ (via a doubling scheme) such that $p_{I_l}^*(m) \ge p_{I_r}^*(m)$ and $p_{I_l}^*(M) \le p_{I_r}^*(M)$ }
        \State{$s_{\min}\gets m$ and $s_{\max}\gets M$}
        \For{$t = 1,\dots, T = \ceil{1+\log((M-m)/\epsilon)}$}
        \State{$s_{\text{mid}} \gets (s_{\min}+s_{\max})/2$}
        \If{$p|_{I_l}^*(s_{\text{mid}}) \le p|_{I_r}^*(s_{\text{mid}})$}
            \State{$s_{\max} \gets s_{\text{mid}}$}
        \Else{}
            \State{$s_{\min} \gets s_{\text{mid}}$}
        \EndIf
        \EndFor
        \State{$s_0 \gets \frac{s_{\min}+s_{\max}}{2}$}
        \State \Return{$s_0x + p|_{I_l}^*(s_0)$}
    \end{algorithmic}
\end{algorithm}

{
The lemma and theorem validate \Cref{alg:bisect}.\\
}

\begin{lemma}[Efficient Evaluation of $p|_I^*(s)$]\label{lem:conj}
    For any univariate polynomial $p$ of degree $n$ and { some $O(1)$-width} compact interval $I$ over which $p$ is convex, we denote $p|_I$ as its restriction over $I$ and $p|_I^*$ as the convex conjugate over its restriction, as per \Cref{assump:note}. We can numerically evaluate $p|_I^*(s)$ for any $k\in\R$ up to a { machine precision of $\epsilon$ in $O(n\log(1/\epsilon))$ time.} 
\end{lemma}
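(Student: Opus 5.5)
Write $I=[a,b]$. The plan is to reduce the evaluation of $p|_I^*(s)=\sup_{x\in I}\{sx-p(x)\}$ to a one-dimensional root-finding problem for a monotone function. We then solve that problem by bisection, using $O(\log(1/\epsilon))$ iterations of cost $O(n)$ each.

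\emph{Concavity of the objective.} Since $p$ is convex on $I$, the function $h_s(x):=sx-p(x)$ is concave on $I$, and $h_s'(x)=s-p'(x)$ is nonincreasing on $I$ because $p''\ge 0$ there. The maximizer $x^\star$ of $h_s$ over $I$ is therefore determined by the sign pattern of $s-p'$:
\begin{itemize}
\item if $s\le p'(a)$, then $h_s'\le 0$ on $I$ and $x^\star=a$;
\item if $s\ge p'(b)$, then $h_s'\ge 0$ on $I$ and $x^\star=b$;
\item otherwise $p'(a)<s<p'(b)$, and the maximizer is any $x^\star\in(a,b)$ with $p'(x^\star)=s$, which exists by the intermediate value theorem.
\end{itemize}
So the first step costs two evaluations of $p'$, hence $O(n)$ operations under the assumption in \Cref{assump:bit}.

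\emph{Bisection in the interior case.} Here we bisect on $[a,b]$ using the sign of $s-p'(x_{\rm mid})$, which is valid because $p'$ is nondecreasing on $I$. After $t$ steps the bracket containing a root of $s-p'$ has width $(b-a)2^{-t}$. Since $b-a\in O(1)$ by the standing assumption, $t=\lceil\log_2((b-a)/\epsilon)\rceil=O(\log(1/\epsilon))$ steps make the width at most $\epsilon$. Each step evaluates $p'$ once, for $O(n)$ operations. The total cost is $O(n\log(1/\epsilon))$, plus one final evaluation of $h_s$ at the returned point $\hat x$.

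\emph{Accuracy of the value.} This is the main obstacle: the error is in the location $\hat x$, and it must be turned into an error in the value $h_s(\hat x)$. The key observation is that $h_s$ is concave with $h_s'(x^\star)=0$ in the interior case. Hence
\[
0\le h_s(x^\star)-h_s(\hat x)\le |h_s'(\hat x)|\,|\hat x-x^\star| = |p'(\hat x)-p'(x^\star)|\,|\hat x-x^\star|\le \Big(\max_{I}p''\Big)\,|\hat x-x^\star|^2 .
\]
If one only wants a first-order bound, replace the right-hand side by $(|s|+\max_I|p'|)\,|\hat x-x^\star|$. Either way the error is at most a constant times $\epsilon$, where the constant depends only on $p$ and $I$. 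Relabelling $\epsilon$ by a constant factor keeps the iteration count $O(\log(1/\epsilon))$.

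In the boundary cases the value $h_s(a)$ or $h_s(b)$ is computed exactly up to machine precision in $O(n)$ time. The overall bound is therefore $O(n\log(1/\epsilon))$. I expect the delicate point to be making precise that the constants $\max_I|p'|$ and $\max_I p''$ are treated as $O(1)$ under the paper's relative-precision assumption. An alternative is to note that the quadratic bound above gives accuracy $\epsilon$ already after about $\tfrac12\log(1/\epsilon)$ steps, so these constants only affect the additive term inside the logarithm.
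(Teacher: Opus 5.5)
Your proposal is correct and follows essentially the same route as the paper: split on whether $s\le p'(a)$, $s\ge p'(b)$, or $p'(a)<s<p'(b)$, and in the last case bisect on the monotone $p'$ for $O(\log(1/\epsilon))$ steps at $O(n)$ each. Your extra step, which uses concavity of $h_s$ to turn the location error $|\hat x-x^\star|$ into a value error, and your remark that the endpoint test costs $O(n)$ rather than $O(1)$, tighten details the paper's proof skips; they do not change the approach.
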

\begin{proof}
    { Since $p|_I^*(s) = \sup_{x \in I}\{sx-p(x)\}$, the interval $I$ is compact, and $sx-p(x)$ is continuous, the supremum is attained. It therefore suffices to first compute $x_0 := \argmax_{x\in I} \{sx-p(x)\}$, which then yields $p|_I^*(s) = sx_0-p(x_0)$.}
    
    Let $x_L, x_U$ be the lower and upper endpoints of $I$; since $p$ is convex over $I$, its derivative must be monotone, so we have $\frac{d}{dx}p(x_L) \le \frac{d}{dx}p(x) \le \frac{d}{dx}p(x_U)$ for all $x \in I$. Then we have two cases: 
    \vspace{-5mm}
    \begin{enumerate}[(i)]
        \item If either $s \le \frac{d}{dx}p(x_L)$ or $s \ge \frac{d}{dx}p(x_U)$, then $x_0$ must be respectively either $x_L$ or $x_U$, so they can be identified in $O(1)$ time.
        \item When $\frac{d}{dx}p(x_L) < s < \frac{d}{dx}p(x_U)$. Since $p$ is convex over $I$, its derivative is monotone, so we can binary search for some $x_0\in\left(p(x_L), p(x_U)\right)$ where $\frac{d}{dx}p(x_0) = s$. {With a machine precision of $\epsilon$, this takes $\log_2\left(\frac{x_U-x_L}{\epsilon}\right) = O(\log(1/\epsilon))$ evaluations of $\frac{d}{dx}p$, plus an additional $O(\log(1/\epsilon))$ time. Since evaluation of $\frac{d}{dx}p$ is in $O(n)$, the procedure takes $O(n\log(1/\epsilon))$ overall.}
        \vspace{-5mm}
    \end{enumerate}
    and the casework concludes our proof.
\end{proof}

\begin{theorem}[Correctness of \Cref{alg:bisect}]\label{thm:bitangent}
    Given a univariate polynomial $p$, convex intervals $I_l, I_r$ on $p$, arithmetic precision $ \epsilon$, and a constant $\mathcal L \ge 1$ such that $|p'| \le \mathcal L$  on the interval $\conv(I_l \cup  I_r)$, \Cref{alg:bisect} computes $\bitan_p(I_l, I_r)$ up to arithmetic {precision $\epsilon$ in $O\left(n\log(1/\epsilon)\log (\mathcal L/\epsilon)\right)$ time.}
\end{theorem}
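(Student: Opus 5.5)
The proof has two parts: correctness and running time. Correctness comes from \Cref{lemma:binary}; the running time comes from counting bisection steps, each charged at the cost of \Cref{lem:conj}.

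\textbf{Correctness.} Define $\Delta(s):=p|_{I_l}^*(s)-p|_{I_r}^*(s)$.
\begin{itemize}
\item By \Cref{lemma:binary}(c), $\Delta$ has exactly one zero $s_0$. We have $\Delta(s)>0$ for $s<s_0$ and $\Delta(s)<0$ for $s>s_0$.
\item By \Cref{lemma:binary}(a), any $m\le \min p'$ and $M\ge\max p'$ on $\conv(I_l\cup I_r)$ give $\Delta(m)\ge0\ge\Delta(M)$. So $m=-\mathcal L$ and $M=\mathcal L$ work. The doubling scheme, started from $\pm1$, therefore stops after $O(\log \mathcal L)$ doublings with $|m|,|M|\le 2\mathcal L$.
\item The bisection keeps the invariant $\Delta(s_{\min})\ge 0\ge \Delta(s_{\max})$, hence $s_0\in[s_{\min},s_{\max}]$ at every step.
\item After $T=\ceil{1+\log((M-m)/\epsilon)}$ halvings the bracket has width at most $\epsilon/2$. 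The returned midpoint then satisfies $|s_0^{\mathrm{out}}-s_0|\le\epsilon$.
\item By \Cref{lemma:binary}(b) and \Cref{cor.bitangent}, $\bitan_p(I_l,I_r)$ is the affine function with slope $s_0$ and intercept $-p|_{I_l}^*(s_0)$.
\end{itemize}

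\textbf{Propagating the slope error.} I also need the intercept to be accurate. The map $s\mapsto p|_I^*(s)$ is convex. It is also $\max_{x\in I}|x|$-Lipschitz, since its subgradients are maximizers $x\in I$. Because $\conv(I_l\cup I_r)$ has width $O(1)$ by \Cref{assump:bit}, the intercept error is $O(\epsilon)$. The induced error in the affine function on the interval is then $O(\epsilon)$, which is ``up to precision $\epsilon$'' after rescaling $\epsilon$ by a constant. I should point out a sign issue: the algorithm's return line writes $+p|_{I_l}^*(s_0)$. I would read it, consistently with \Cref{lemma:binary}(b), as $s_0x-p|_{I_l}^*(s_0)$.

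\textbf{Complexity.} Every iteration, and every doubling step, evaluates two conjugates. Each evaluation costs $O(n\log(1/\epsilon))$ by \Cref{lem:conj}, which applies because $I_l$ and $I_r$ are convex intervals of $O(1)$ width.
\begin{itemize}
\item The bisection has $T=O(\log(\mathcal L/\epsilon))$ iterations, since $M-m=O(\mathcal L)$.
\item The doubling phase has $O(\log\mathcal L)$ steps, which is absorbed into the same bound.
\end{itemize}
The total is therefore $O(n\log(1/\epsilon)\log(\mathcal L/\epsilon))$.

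\textbf{Main obstacle.} The delicate step is making the error analysis honest. The sign tests on $\Delta(s_{\mathrm{mid}})$ use conjugate values that are only approximate, with error $\epsilon$. Near $s_0$ such a test may therefore go the wrong way. I would handle this by a slope estimate on the side $s>s_0$:
\[
\Delta(s)\le -(s-s_0)\,(\min I_r-\max I_l),
\]
because the maximizer over $I_r$ moves at least as far right. A mistaken test then only happens when $|s-s_0|=O(\epsilon/\mathrm{gap})$, so the final slope error remains $O(\epsilon)$ up to constants depending on the gap between the intervals. If this cannot be made uniform, I would fall back on the paper's standing convention of treating $\epsilon$-accurate evaluations as exact, under which the sign tests are exact and the argument above goes through directly.
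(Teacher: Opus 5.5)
Your proposal is correct and follows the paper's proof: \Cref{lemma:binary}(a) and (c) supply the initial bracket and the sign invariant for bisection, and the cost is $O(\log(\mathcal L/\epsilon))$ doubling and bisection rounds, each charged two conjugate evaluations at $O(n\log(1/\epsilon))$ by \Cref{lem:conj}. Your slope-to-function error analysis and your catch of the sign typo in the return line go beyond the paper, which simply treats the $\epsilon$-accurate slope as exact; there is one small slip, though: the intercept's Lipschitz constant $\max_{x\in I}|x|$ is not controlled by the width, so the $O(\epsilon)$ bound on $\conv(I_l\cup I_r)$ should come instead from writing $g_s(x)=sx-p|_{I_l}^*(s)$ and noting that $g_s(x)-g_{s_0}(x)=(s-s_0)(x-\xi)$ for some $\xi\in I_l$.
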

\begin{proof}
Since  $|p'| \le \mathcal L$  on the interval $\conv(I_l \cup  I_r)$, then \Cref{lemma:binary} implies that after at most $\log (2\mathcal L)$ doubling steps the initial step of      
\Cref{alg:bisect} finds $m<0$ and $M>0$ such that  $p_{I_l}^*(m) \ge p_{I_r}^*(m)$ and $p_{I_l}^*(M) \le p_{I_r}^*(M)$ with $|m|\le 2\mathcal L$ and $M \le 2\mathcal L$.   \Cref{lemma:binary} also implies that the slope $s_0$ of $\bitan_p(I_1, I_2)$ must be in the interval $[-2\mathcal L, 2\mathcal L]$. Next, again by \Cref{lemma:binary}, we have $p|_{I_l}^*(s_{\text{mid}}) \le p|_{I_r}^*(s_{\text{mid}})$ if and only if $s_0 \le s_{\text{mid}}$, where $s_0$ is the slope of $\bitan_p(I_l,I_r)$. This means each time we halve the interval $[s_{\min}, s_{\max}]$, the new interval still contains $s_0$. After $T = \ceil{1+\log(4\mathcal L/\epsilon)}$ bisections, $s_0$ is then sandwiched in the interval $[s_{\min}, s_{\max}]$ of size $4\mathcal L \cdot 2^{-1-\log(4\mathcal L/\epsilon)} \le \epsilon$, which is within arithmetic error, and since $s_0$ is correctly identified, the corresponding output must be the unique $\bitan_p(I_l,I_r)$.

    For runtime analysis, the algorithm takes $O(\log(\mathcal L/\epsilon))$ doubling and bisection rounds, and the runtime of each round is dominated by the two calls to the routine in \Cref{lem:conj}, each taking {$O(n\log(1/\epsilon))$ time. The overall runtime is therefore $O\left(n\log(1/\epsilon)\log (\mathcal L/\epsilon)\right)$.}
\end{proof}

%% file: sections/graham.tex
\subsection{Continuous Graham Scan}
\label{sec.graham}
We next present \Cref{alg:graham}, which computes the essential ingredient for the convex envelope of a polynomial $p$ over an interval $\cI$ as we detail below in~\eqref{graham.env}. \Cref{alg:graham} extends the Graham Scan \cite{graham1972efficient} to continuous polynomial graphs, preserving the key geometric intuition while replacing discrete line segments with continuous bitangent segments. Specifically, instead of connecting pairs of 2D points, \Cref{alg:graham} uses
 \Cref{alg:ci} to compute the convex intervals $CI_\cI(p)$,
and 
 \Cref{alg:bisect} to compute the  bitangents between pairs of intervals in $CI_\cI(p)$.   \Cref{thm:graham} below formalizes the correctness of the algorithm and its running time.

\begin{algorithm}[H]
    \caption{Continuous Graham Scan}
    \begin{algorithmic}\label{alg:graham}
        \State{\textbf{Input}: univariate polynomial $p$ of degree $n$, closed interval $\cI = [x_L,x_U]$}
        \State{{Compute $CI_\cI(p) = \{I_0, I_1, \dots, I_k\}$} }
        \State{Initialize empty stack $S$ for the bitangents that we will keep}
        \For{$i=0, 1, \dots, k-1$}
            \State{$l \gets \bitan_p(I_i, I_{i+1})$}
            \While{$S$ not empty and $\texttt{top}(S)$ has slope greater than or equal that of $l$}
            \State{$l_{bad} := S.\texttt{pop()}$}
            \State{$I_j \gets$ the left convex interval of $l_{bad}$}
            \State{$l \gets \operatorname{\bitan}_p(I_j, I_{i+1})$}
            \EndWhile
            \State{$S.\texttt{push(}l\texttt{)}$}
        \EndFor
        \State{\Return{$S$}}
    \end{algorithmic}
\end{algorithm}

The following three technical lemmas will help establish the correctness of~\Cref{alg:graham}.

\begin{lemma}\label{lemma.intervals} 
Suppose $p$ is a univariate polynomial and $I_1, I_2, I_3\subseteq \R$ are disjoint compact intervals with $I_1\le I_2 \le I_3$ and such that $\slope(\bitan_p(I_1,I_2)) >  \slope(\bitan_p(I_2,I_3))$.  Then 
\begin{equation}\label{slopes}
\slope(\bitan_p(I_1,I_2)) > \slope(\bitan_p(I_1,I_3)) > \slope(\bitan_p(I_2,I_3))
\end{equation}
If in addition
\begin{equation}\label{eq.minorize}
\bitan_p(I_1,I_2) \le p \text{ on } \conv(I_1\cup I_2) \; \text{ and } \; \bitan_p(I_2,I_3) \le p  \text{ on }
\conv(I_2\cup I_3),
\end{equation}
then
\begin{equation}\label{slopes.glued}
\bitan_p(I_1,I_3) \le p \text{ on } \conv(I_1\cup I_3). 
\end{equation}
\end{lemma}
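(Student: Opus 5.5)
The plan is to translate everything into statements about the conjugates, using \Cref{lemma:binary}. Write $f_i := p|_{I_i}^*$ for $i=1,2,3$. Let $s_{12}, s_{23}, s_{13}$ be the unique slopes given by \Cref{cor.bitangent} with $f_1(s_{12})=f_2(s_{12})$, $f_2(s_{23})=f_3(s_{23})$ and $f_1(s_{13})=f_3(s_{13})$. These are the slopes of $\bitan_p(I_1,I_2)$, $\bitan_p(I_2,I_3)$ and $\bitan_p(I_1,I_3)$. \Cref{lemma:binary}(b) writes each bitangent as $\bitan_p(I_a,I_b)(x) = s_{ab}x - f_a(s_{ab}) = s_{ab}x - f_b(s_{ab})$. \Cref{lemma:binary}(c) gives the sign rule: for $I_a\le I_b$, we have $f_a(s)<f_b(s)$ exactly when $s>s_{ab}$, and $f_b(s)<f_a(s)$ exactly when $s<s_{ab}$.

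\emph{Slope chain~\eqref{slopes}.} This follows from the sign rule alone, given the hypothesis $s_{12}>s_{23}$.
\begin{itemize}
\item At $s=s_{12}$ we have $f_1=f_2$. Since $s_{12}>s_{23}$, the pair $(I_2,I_3)$ gives $f_2(s_{12})<f_3(s_{12})$, so $f_1(s_{12})<f_3(s_{12})$. The pair $(I_1,I_3)$ then forces $s_{12}>s_{13}$.
\item At $s=s_{23}$ we have $f_2=f_3$. Since $s_{23}<s_{12}$, we get $f_2(s_{23})<f_1(s_{23})$, so $f_3(s_{23})<f_1(s_{23})$. This forces $s_{23}<s_{13}$.
\end{itemize}
Both inequalities are strict, as required.

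\emph{Minorization~\eqref{slopes.glued}.} Write $\ell_{ab}:=\bitan_p(I_a,I_b)$. Since $I_1\le I_2\le I_3$, we have $\conv(I_1\cup I_3)=\conv(I_1\cup I_2)\cup\conv(I_2\cup I_3)$. I would compare $\ell_{13}$ with $\ell_{12}$ on the first piece and with $\ell_{23}$ on the second, using a subgradient inequality for the convex functions $f_i$.

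First piece. Let $x_1\in I_1$ be a maximizer of $s_{12}x-p(x)$ over $I_1$, so $x_1\in\partial f_1(s_{12})$ and $\ell_{12}(x_1)=p(x_1)$. Then $f_1(s_{13})\ge f_1(s_{12})+x_1(s_{13}-s_{12})$. This gives
\[
\ell_{13}(x)-\ell_{12}(x) = (s_{13}-s_{12})x - f_1(s_{13}) + f_1(s_{12}) \le (s_{13}-s_{12})(x-x_1).
\]
The right-hand side is $\le 0$ for $x\ge x_1$, because $s_{13}<s_{12}$.
\begin{itemize}
\item On $[x_1,\max I_2]$, we get $\ell_{13}\le\ell_{12}\le p$ by~\eqref{eq.minorize}.
\item On $[\min I_1,x_1]\subseteq I_1$, we have $\ell_{13}\le p$ directly from \Cref{def:bitan}.
\end{itemize}

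Second piece. Let $x_3\in I_3$ maximize $s_{23}x-p(x)$ over $I_3$. Then $f_3(s_{13})\ge f_3(s_{23})+x_3(s_{13}-s_{23})$. This gives
\[
\ell_{13}(x)-\ell_{23}(x)\le (s_{13}-s_{23})(x-x_3),
\]
which is $\le 0$ for $x\le x_3$, because $s_{13}>s_{23}$.
\begin{itemize}
\item On $[\min I_2,x_3]$, we get $\ell_{13}\le\ell_{23}\le p$.
\item On $[x_3,\max I_3]\subseteq I_3$, we have $\ell_{13}\le p$ by \Cref{def:bitan}.
\end{itemize}
These four ranges cover $\conv(I_1\cup I_3)$, which proves~\eqref{slopes.glued}.

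The step needing the most care is this second part. One has to pick the right touching points $x_1,x_3$ as subgradients of the conjugates, and check that the signs of $s_{13}-s_{12}$ and $s_{13}-s_{23}$ from the first part orient the inequalities correctly.
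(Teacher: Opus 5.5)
Your proposal is correct and follows the paper's proof essentially step for step: the slope chain comes from Lemma~\ref{lemma:binary}(c) evaluated at $s_{12}$ and $s_{23}$. The minorization compares $\bitan_p(I_1,I_3)$ with $\bitan_p(I_1,I_2)$ to the right of the touching point $x_1$ and with $\bitan_p(I_2,I_3)$ to the left of $x_3$; your subgradient inequality for $p|_{I_1}^*$ at $x_1$ is just the paper's $\bitan_p(I_1,I_3)(x_1)\le p(x_1)=\bitan_p(I_1,I_2)(x_1)$ rewritten, and your explicit four-range covering spells out the final combination the paper leaves implicit.
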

\begin{proof}
 Let $s_1 := \slope(\bitan_p(I_1,I_2))$ and $s_2 := \slope(\bitan_p(I_2,I_3))$.  Thus $p_{I_1}^*(s_1) = p_{I_2}^*(s_1)$ and $p_{I_2}^*(s_2) = p_{I_3}^*(s_2)$.
Since $s_1 > s_2$, Lemma~\ref{lemma:binary}(c) applied to $I_2,I_3$ and $s_1> s_2$ implies that
\[
p_{I_1}^*(s_1) = p_{I_2}^*(s_1) < p_{I_3}^*(s_1).
\]
Thus Lemma~\ref{lemma:binary}(c) again implies that $\bar s = \slope(\bitan_p(I_1,I_3))$, that is, the solution to $p_{I_1}^*(s) = p_{I_3}^*(s)$, must satisfy $\bar s < s_1$.

Similarly, since $s_1 > s_2$, Lemma~\ref{lemma:binary}(c) applied to $I_1,I_2$ and $s_2> s_1$ implies that
\[
p_{I_3}^*(s_2) = p_{I_2}^*(s_2) > p_{I_1}^*(s_2).
\]
Thus Lemma~\ref{lemma:binary}(c) again implies that $\bar s > s_2$. Therefore 
$ s_1 > \bar s > s_2$ and~\eqref{slopes} is established.

We next show~\eqref{slopes.glued} provided~\eqref{eq.minorize} holds.  To that end, let $x_1\in I_1$ be a root of $p-\bitan_p(I_1,I_2)$ in $I_1$ and 
$x_3\in I_3$ be a root of $p-\bitan_p(I_2,I_3)$ in $I_3$. Since $\bitan_p(I_1,I_3)\le p$ on $I_1\cup I_3$, it follows that
\[
\bitan_p(I_1,I_3)(x_1) \le p(x_1) = \bitan_p(I_1,I_2)(x_1) \]
and 
\[
\bitan_p(I_1,I_3)(x_3) \le p(x_3) = \bitan_p(I_2,I_3)(x_3).
\]
Since the functions $\bitan_p(I_1,I_2), \bitan_p(I_2,I_3), \bitan_p(I_1,I_3)$ are all affine, the previous two inequalities and~\eqref{slopes} imply that
\[\bitan_p(I_1,I_3)(x) \le \bitan_p(I_1,I_2)(x) \text{ 
for all } x \ge x_1\]
 and 
\[
\bitan_p(I_1,I_3)(x) \le 
\bitan_p(I_2,I_3)(x)\text{ 
for all } x\le x_3.
\]  
Inequality~\eqref{slopes.glued} follows by 
combining the latter two inequalities and~\eqref{eq.minorize}.

\end{proof}

\begin{lemma}\label{lemma.slopes} Suppose $p$ is a univariate polynomial and $I_1, I_2, I_3\subseteq \R$ are disjoint compact intervals with $I_1\le I_2 \le I_3$ such that $\slope(\bitan_p(I_1,I_2)) <  \slope(\bitan_p(I_2,I_3))$.
Let $x_1^L \in I_1, x_1^U\in I_2$ be roots of $p -\bitan_p(I_1,I_2)$ in $I_1$ and $I_2$ respectively,  and  let $x_2^L \in I_2, x_2^U\in I_3$ be roots of $p -\bitan_p(I_2,I_3)$ in $I_2$ and $I_3$ respectively. Then the following inequalities hold.  First,
\begin{equation}\label{first}
x_1^U \le x_2^L.
\end{equation}
Second, when $x_1^L > \min I_1$ it also holds that
\begin{equation}\label{second}
p'(x_1^L) \le \slope(\bitan_p(I_1,I_2)).
\end{equation}
Third, when $x_1^U < \max I_2$ it also holds that
\begin{equation}\label{third}
 p'(x_1^U) \ge \slope(\bitan_p(I_1,I_2)) .
\end{equation}
The analogous inequalities also hold for $x_2^L$ when $x_2^L > \min I_2$, and for $x_2^U$ when $x_2^U < \max I_3$.
\end{lemma}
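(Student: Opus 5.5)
The plan is to treat the three inequalities separately. Each uses only two facts: $x_1^L,x_1^U$ (resp.\ $x_2^L,x_2^U$) are contact points where the bitangent touches $p$ from below on the relevant interval, and the two bitangents have different slopes. Write $g_1:=\bitan_p(I_1,I_2)$ and $g_2:=\bitan_p(I_2,I_3)$, with slopes $s_1<s_2$.

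For \eqref{second} and \eqref{third} I would use one-sided derivatives of $p-g_1$, which is nonnegative on $I_1\cup I_2$ by Definition~\ref{def:bitan}.
\begin{itemize}
\item For \eqref{second}: $x_1^L>\min I_1$ means points slightly to the left of $x_1^L$ lie in $I_1$. There $p-g_1\ge 0=(p-g_1)(x_1^L)$, so the left difference quotient $\frac{(p-g_1)(x)-(p-g_1)(x_1^L)}{x-x_1^L}$ is $\le 0$ for $x<x_1^L$. Letting $x\to x_1^L$ gives $p'(x_1^L)-s_1\le 0$.
\item For \eqref{third}: $x_1^U<\max I_2$, so points slightly to the right of $x_1^U$ lie in $I_2$. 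The right difference quotient is $\ge 0$, which gives $p'(x_1^U)\ge s_1$.
\item The analogous statements for $x_2^L$ and $x_2^U$ follow verbatim with $g_2$ and $s_2$.
\end{itemize}
These steps are routine.

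The main step is \eqref{first}. I would argue by contradiction: suppose $x_2^L<x_1^U$. Both points lie in $I_2$, and both $g_1$ and $g_2$ lie below $p$ on $I_2$.
\begin{itemize}
\item Since $g_1\le p$ at $x_2^L$ and $g_2(x_2^L)=p(x_2^L)$, we get $g_1(x_2^L)\le g_2(x_2^L)$.
\item Symmetrically, $g_2(x_1^U)\le p(x_1^U)=g_1(x_1^U)$.
\item The difference $h:=g_2-g_1$ is affine with slope $s_2-s_1>0$, so it is strictly increasing. But $h(x_2^L)\ge 0\ge h(x_1^U)$ with $x_2^L<x_1^U$ contradicts strict monotonicity.
\end{itemize}
Hence $x_1^U\le x_2^L$. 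No convexity of $p$ on $I_2$ is needed, only that both contact points lie in $I_2$, where both bitangents minorize $p$. The one subtlety is making sure the minorization $g_i\le p$ is applied on $I_2$ itself, not on the gap $\conv(I_1\cup I_2)\setminus(I_1\cup I_2)$, since Definition~\ref{def:bitan} only guarantees it on $I_1\cup I_2$ and $I_2\cup I_3$.

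I expect the comparison argument for \eqref{first} to be the main obstacle, in particular the bookkeeping of which interval each contact point lies in. The derivative inequalities are standard first-order conditions at a boundary-touching minimum. The closing remark ("analogous inequalities") I would dispatch by noting the argument uses only that the point is a root of $p-g$ in an interval on which $g\le p$, with room on the appropriate side.
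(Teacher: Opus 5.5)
Your proposal is correct and follows essentially the paper's proof: \eqref{second} and \eqref{third} come from one-sided difference quotients of $p-\bitan_p(I_1,I_2)\ge 0$ at the contact point. For \eqref{first}, the paper argues that since $x_1^U$ maximizes $s_1x-p(x)$ over $I_2$ and $s_2>s_1$, no $x\in I_2$ with $x<x_1^U$ can maximize $s_2x-p(x)$, which is your comparison of the two affine minorants on $I_2$ in argmax language (and your write-up of \eqref{third} avoids a sign slip in the paper's displayed difference for that step).
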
 
\begin{proof}
Let $s_1 =  \slope(\bitan_p(I_1,I_2))$ and $s_2 =  \slope(\bitan_p(I_2,I_3))$.  The construction of $\bitan_p(\cdot)$ implies that 
\[
x_1^U = \argmax\{s_1x-p(x): x\in I_2\}, \; x_2^L = \argmax\{s_2x-p(x): x\in I_2\}.
\]
Since $s_2 > s_1$, it follows that $(s_2-s_1)(x_1^U-x) > 0$ for all $x\in I_2$ with $x < x_1^U$.  Since
$x_1^U = \argmax\{s_1x-p(x): x\in I_2\}$, it follows that
 for all $x\in I_2$ with $x < x_1^U$
\[
s_2(x_1^U-x) > s_1(x_1^U-x) \ge p(x_1^U) - p(x) \Leftrightarrow s_2 x - p(x) < s_2 x_1^U - p(x_1^U).
\]
Therefore
\[
x_2^L = \argmax\{s_2x-p(x): x\in I_2\} \ge x_1^U
\]
and hence~\eqref{first} follows.

Next, suppose $x_1^L > \min I_1$.  The construction of $\bitan_p(\cdot)$ and the choice of $x_1^L$ imply that for all $x\in I_1$ with $x< x_1^L$ the following holds
\[
p(x_1^L) - p(x) \le \bitan_p(I_1,I_2)(x_1^L) - \bitan_p(I_1,I_2)(x) = \slope(\bitan_p(I_1,I_2))(x_1^L - x).
\]
Thus
\[
p'(x_1^L) = \lim_{x\uparrow x_1^L} \frac{p(x_1^L) - p(x)}{x_1^L - x} \le\slope(\bitan_p(I_1,I_2))
\]
and hence~\eqref{second} follows.

Finally, suppose $x_1^U < \max I_2$.  Again the construction of $\bitan_p(\cdot)$ and the choice of $x_2^U$ imply that for all $x\in I_2$ with $x> x_1^U$ the following holds
\[
p(x) - p(x_1^U)  \ge \bitan_p(I_1,I_2)(x_1^U) - \bitan_p(I_1,I_2)(x) = \slope(\bitan_p(I_1,I_2))(x - x_1^U).
\]
Thus
\[
p'(x_1^U) = \lim_{x\downarrow x_1^U} \frac{p(x) - p(x_1^U)}{x- x_1^U} \ge\slope(\bitan_p(I_1,I_2))
\]
and hence~\eqref{third} follows.

\end{proof}

\begin{lemma}\label{lemma.stack} At the end of the $i$-th iteration of the for loop in \Cref{alg:graham} the stack $S$ (from bottom to top) is as follows 
\[
S = \{\bitan_p(I_{i_t},I_{i_{t+1}}): t=1,\dots,r\}
\]
for some $r \ge 1$ and $0=i_1 < i_2 < \cdots < i_{r+1} = i+1$.  Furthermore, the following two properties hold.  First, for $t=1,\dots,r$
\begin{equation}\label{bitangent.minorizes}
\bitan_p(I_{i_t},I_{i_{t+1}}) \le p \text{ on } \conv(I_{i_t} \cup I_{i_{t+1}}).
\end{equation}
Second, if $r>1$ then for $t=1,\dots,r-1$
\begin{equation}\label{slope.monotone}
\slope(\bitan_p(I_{i_t},I_{i_{t+1}})) \le \slope(\bitan_p(I_{i_{t+1}},I_{i_{t+2}})).
\end{equation}
\end{lemma}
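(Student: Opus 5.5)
I would prove \Cref{lemma.stack} by induction on the iteration index $i$, keeping the stated form of $S$ together with \eqref{bitangent.minorizes} and \eqref{slope.monotone} as the invariant. A second, nested induction over the iterations of the while loop handles a single for-loop pass.

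\textbf{Base case $i=0$.} The stack is empty, so the algorithm pushes $l=\bitan_p(I_0,I_1)$. We then have $r=1$, $i_1=0$ and $i_2=1$. Between $\max I_0$ and $\min I_1$ the set $\{p''<0\}$ is an open interval, by \Cref{def:ci} and \Cref{lemma.roots}. Hence $p$ is concave on $\conv(I_0\cup I_1)\setminus(I_0\cup I_1)$, and \Cref{cor.bitangent} gives \eqref{bitangent.minorizes}. Condition \eqref{slope.monotone} holds vacuously.

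\textbf{Inductive step.} Assume the claim at the end of iteration $i-1$, with indices $0=i_1<\dots<i_{r+1}=i$. In iteration $i$ the candidate starts as $l=\bitan_p(I_i,I_{i+1})$, and this $l$ minorizes $p$ on $\conv(I_i\cup I_{i+1})$ by \Cref{cor.bitangent}, exactly as in the base case. I will prove by induction on the number of pops the following while-loop invariant. The stack is a prefix $\bitan_p(I_{i_t},I_{i_{t+1}})$, $t=1,\dots,q$, of the old stack. The candidate is $l=\bitan_p(I_{i_{q+1}},I_{i+1})$, and it satisfies $l\le p$ on $\conv(I_{i_{q+1}}\cup I_{i+1})$.

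Consider a pop, which happens when $\slope(\bitan_p(I_{i_q},I_{i_{q+1}}))\ge\slope(l)$. If the inequality is strict, I apply \Cref{lemma.intervals} with $I_1=I_{i_q}$, $I_2=I_{i_{q+1}}$, $I_3=I_{i+1}$. Hypothesis \eqref{eq.minorize} holds because the first bitangent is on the stack, so the outer induction hypothesis covers it, and the second is the current candidate, covered by the inner hypothesis. The lemma yields \eqref{slopes.glued}, so the new candidate $\bitan_p(I_{i_q},I_{i+1})$ again minorizes $p$ on its hull.

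The equality case is not covered by \Cref{lemma.intervals}, and I expect it to be the main obstacle. Here I would argue directly from \Cref{lemma:binary}(c). With common slope $s$, we get $p^*_{I_{i_q}}(s)=p^*_{I_{i_{q+1}}}(s)=p^*_{I_{i+1}}(s)$. So the two bitangents are the same affine function, which then equals $\bitan_p(I_{i_q},I_{i+1})$ by uniqueness. Minorization on the union of the two hulls is then immediate, and the union of the two hulls is the whole of $\conv(I_{i_q}\cup I_{i+1})$.

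\textbf{Termination of the while loop.} The loop stops when the stack is empty or when its top has slope strictly smaller than $l$. In both cases we push $l$. The indices stay strictly increasing and end with $i+1$, and the first index remains $i_1=0$, because the bottom element always has left interval $I_0$ (and if everything is popped, the candidate's left interval is $I_{i_1}=I_0$). Condition \eqref{bitangent.minorizes} holds for the prefix by the outer hypothesis and for the pushed $l$ by the inner invariant. Condition \eqref{slope.monotone} holds for the prefix by the outer hypothesis, and at the new junction by the exit condition of the while loop. The only mildly delicate bookkeeping is the claim that, after popping $\bitan_p(I_{i_q},I_{i_{q+1}})$, the left interval of the popped element is precisely the right interval of the new top. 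This follows from the consecutive-index structure of the stack.
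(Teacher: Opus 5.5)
Your proposal is correct and follows the paper's proof essentially step for step: induction on $i$, an inner invariant on the candidate $l$ across while-loop updates (the paper's \emph{Claim}), justified by \Cref{cor.bitangent} for the first candidate and by \Cref{lemma.intervals} at each pop, and the same case split on whether the stack is empty at the push. Your separate handling of the tie $\slope(\texttt{top}(S))=\slope(l)$ via \Cref{lemma:binary}(c), and your check of the concavity hypothesis of \Cref{cor.bitangent} for consecutive convex intervals, are slightly more careful than the paper, which invokes \Cref{lemma.intervals} (stated only for strict inequality) even though \Cref{alg:graham} pops on ``greater than or equal.''
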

\begin{proof}
We proceed by induction on $i$.  

{\bf Case $i=0$.}  Since $S$ is initially empty at the beginning of the $0$-th iteration, the while loop is entirely skipped and at the end of the $0$-th iteration $S = \{\bitan_p(I_{0},I_{1})\}$.  The bitangent  $\bitan_p(I_{0},I_{1})$ satisfies~\eqref{bitangent.minorizes} by~\Cref{cor.bitangent}. Since $r=1$ in this initial iteration,~\eqref{slope.monotone} trivially holds.

{\bf Case $i \ge 1$.} We will assume the above statements hold at the end of the $(i-1)$-th iteration and rely on the following claim.

{\bf Claim.} 
Whenever \Cref{alg:graham} updates the variable $l$, if the stack $S$ is nonempty, then the left interval $I_j$ of $l = \bitan_p(I_{j},I_{i+1})$ coincides with the right interval of $\texttt{top}(S)$ immediately after such update, and $l \le p$ on $\conv(I_j \cup I_{i+1})$.

{\it Proof of the claim.} \Cref{alg:graham} updates the variable $l$ at two possible places: one at the beginning of the \texttt{for} loop and the other within the \texttt{while} loop.  We  prove the claim in each of them.  

At the beginning of the $i$-th iteration of the \texttt{for} loop the stack $S$ is exactly what it was at the end of the $(i-1)$-th iteration, namely:
\[
S = \{\bitan_p(I_{i_t},I_{i_{t+1}}): t=1,\dots,r\}
\]
for some $r \ge 1$ and $0=i_1 < i_2 \cdots < i_{r+1} = (i-1)+1$.  In particular 
$\texttt{top}(S)$ is of the form $\bitan_p(I_{j},I_{i})$ 
with right interval $I_i$  which coincides with the left interval of $l = \bitan_p(I_{i},I_{i+1})$.  In addition, \Cref{cor.bitangent} implies that $l \le p$ on $\conv(I_i \cup I_{i+1})$.

At the beginning of the \texttt{while} loop  the stack $S$ is what it was at the end of the $(i-1)$-th iteration after having removed some (but not all) of its top elements.  In particular, 
\[
S = \{\bitan_p(I_{i_t},I_{i_{t+1}}): t=1,\dots,r\}
\]
for some $r \ge 1$ and $0=i_1 < i_2 < \cdots < i_{r+1}\le i$ such that~\eqref{bitangent.minorizes} and~\eqref{slope.monotone} hold.  
In addition, at this stage (the beginning of the \texttt{while} loop) we must have $l= \bitan_p(I_{i_{r+1}},I_{i+1})$ and  $l\le p$ on $\conv(I_{i_{r+1}}\cup I_{i+1})$ as this must have held the last time $l$ was updated. 

Next observe that if at the end of the \texttt{while} loop $S$ is non-empty then it must be the case that $r > 1$ and at that place in the \texttt{while} loop we must have
\[
l_{\text{bad}} = \bitan_p(I_{i_r},I_{i_{r+1}}), S = \{\bitan_p(I_{i_t},I_{i_{t+1}}): t=1,\dots,r-1\}, l = \bitan_p(I_{i_r},I_{i+1}).
\]
Hence $\texttt{top}(S)=\bitan_p(I_{i_{r-1}},I_{i_{r}})$ so its right interval evidently coincides with the left interval of $l = \bitan_p(I_{i_r},I_{i+1})$.  Furthermore, Lemma~\ref{lemma.intervals} applied to $I_1 = I_{i_{r}}, I_2 = I_{i_{r+1}}, I_3 = I_{i+1}$ implies that $l = \bitan_p(I_{i_r},I_{i+1}) \le p$ on $\conv(I_{i_r}\cup I_{i+1})$.

This concludes the proof of the claim.

To finish the proof of~\Cref{lemma.stack}, we consider two possible cases that can occur  right before the update on $S$ in the last step of the for loop.

{\bf Case 1.} The stack $S$ is empty.  In this case, the 
the last element popped out of $S$ must have been of the form $l_{\text{bad}} = \bitan(I_0,I_{j})$. Hence the above claim implies that $l = \bitan(I_0,I_{i+1})$ with $l = \bitan(I_0,I_{i+1}) \le p$.  Thus after the update at the end of the $i$-th iteration of the \texttt{for} loop we have $S = \{\bitan(I_0,I_{i+1})\}$ with $\bitan(I_0,I_{i+1})\le p$ on $\conv(I_0\cup I_{i+1})$.  In other words the one element of the stack $S$ satisfies~\eqref{bitangent.minorizes} and~\eqref{slope.monotone} trivially holds as $r=1$.

{\bf Case 2.} The stack $S$ is nonempty. In this case, the above claim implies that
\[
S = \{\bitan_p(I_{i_t},I_{i_{t+1}}): t=1,\dots,r-1\}
\]
for some $r \ge 2$ and $0=i_1 < \cdots < i_{r}\le i$ such that~\eqref{bitangent.minorizes} and~\eqref{slope.monotone} hold, and also 
\[
l = \bitan_p(I_{i_{r}},I_{i+1}) \le p
\] 
on $\conv(I_{i_{r}}\cup I_{i+1})$. Furthermore, since we just exited the \texttt{while} loop, it must be the case that 
\[
\slope(\bitan_p(I_{i_{r-1}},I_{i_{r}})) < \slope(l) = 
\slope(\bitan_p(I_{i_{r}},I_{i+1})).
\]
Thus after the update at the end of the $i$-th iteration of the \texttt{for} loop we have 
\[
S = \{\bitan_p(I_{i_t},I_{i_{t+1}}): t=1,\dots,r\}
\]
for some $r \ge 2$ and $0=i_1 < \cdots < i_{r} < i_{r+1} = i+1$ such that that~\eqref{bitangent.minorizes} and~\eqref{slope.monotone} hold. 
\end{proof}

Theorem~\ref{thm:graham} below shows the most important technical result of this section, namely, the {\it Graham envelope} $e_{\cI} p$ defined below via~\eqref{graham.env}  is precisely the convex envelope of $p$ over $\cI$.

Suppose $p$ is a univariate polynomial $p$, and $\cI := [x_L, x_U] \subset \R$ is a compact interval.  Let 
\[CI_\cI(p) = \{I_0,\dots,I_k\}\] 
be the set of convex intervals of $p$ with respect to $\cI$ returned by~\Cref{alg:ci}
and  let 
\[
S= \{\bitan_p(I_{i_t}, I_{i_{t+1}}) : t=1,\dots,r\}
\]
be the stack of bitangents returned by  \Cref{alg:graham} for some $0=i_1<i_2<\dots<i_{r} < i_{r+1} = k$. 

For each $\bitan_p(I_{i_t}, I_{i_{t+1}})$ let $x_{t}^L,\ x_{t}^U$ be roots of $\bitan_p(I_{i_t}, I_{i_{t+1}})$ in $I_{i_t}$ and $I_{i_{t+1}}$ respectively. 
Lemma~\ref{lemma.slopes} implies that $x_t^U \le x_{t+1}^L$ for $t=1,\dots,r$. For convenience, let $x_0^R:=x_L$ and $x_U:=x_{r+1}^L$. Thus
\[
x_L=x_0^R 
\le x_1^L < x_1^U \le x_2^L < x_2^U \le \cdots \le x_{r}^L < x_{r}^U \le x_{r+1}^L =
x_U.
\]
We rely on these cutoffs to construct the  function $e_{\cI} p: \cI\rightarrow \R$ piecewise as follows:
\begin{equation}\label{graham.env}
e_{\cI} p(x) = 
\begin{cases}
    \bitan_p(I_{i_t}, I_{i_{t+1}})(x) \; \text{ if }\ x_{t}^L  < x <  x_{t}^U \text{ for some } t \in \{1,\dots,r\}\\
    p(x) \; \text{ otherwise, that is, if}  \ x_{t-1}^U  \le x \le  x_{t}^L \text{ for some } t \in \{1,\dots,r+1\}.
\end{cases}
\end{equation}


\begin{theorem}[Correctness of \Cref{alg:graham}]\label{thm:graham}
    Let $p$ be a univariate polynomial $p$ of degree $n$ over a compact real interval $\cI = [x_L, x_U]$, where $|p'|\le \mathcal L$ over $\cI$.
Then the piecewise function $e_{\cI}p$ constructed via~\eqref{graham.env} is precisely the convex envelope of $p$ over $\cI$.       
    Furthermore, \Cref{alg:graham} terminates within $O(n)$ queries to $\bitan()$ defined by \Cref{alg:bisect}, plus an additional $O(n)$ time, with an overall time complexity of { $O\left(n^2\log(1/\epsilon)\log (\mathcal L/\epsilon)\right)$}.
\end{theorem}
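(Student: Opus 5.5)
The proof will show three facts about $e_{\cI}p$: it underestimates $p$ on $\cI$, it is convex on $\cI$, and it dominates every convex relaxation of $p$ on $\cI$. The runtime bound is then a separate amortized count.

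\textbf{Underestimation and continuity.} Lemma~\ref{lemma.stack} with $i=k-1$ describes the final stack $S=\{\bitan_p(I_{i_t},I_{i_{t+1}})\}$. Each bitangent minorizes $p$ on $\conv(I_{i_t}\cup I_{i_{t+1}})$ by~\eqref{bitangent.minorizes}, and on the pieces where $e_{\cI}p=p$ the bound is trivial. So $e_{\cI}p\le p$ on $\cI$. Moreover $e_{\cI}p$ is continuous, because each bitangent agrees with $p$ at its touching points $x_t^L,x_t^U$.

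\textbf{Convexity.} I will show that $e_{\cI}p$ is convex on each piece and that its one-sided derivatives are nondecreasing at every breakpoint. On the affine pieces this is immediate. On a piece $[x_{t-1}^U,x_t^L]$ where $e_{\cI}p=p$, I need $p''\ge 0$. By~\eqref{first} in Lemma~\ref{lemma.slopes}, both endpoints lie in the same convex interval $I_{i_t}$, since $x_{t-1}^U$ and $x_t^L$ are both roots in $I_{i_t}$. Hence the whole piece lies in $I_{i_t}$, where $p$ is convex. At a breakpoint $x_t^L$, convexity needs $p'(x_t^L)\le \slope$ of the $t$-th bitangent; this is~\eqref{second}. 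At $x_t^U$, it needs $p'(x_t^U)\ge \slope$; this is~\eqref{third}. Where a touching point is an endpoint of its convex interval, the adjacent $p$-piece is degenerate, and the comparison passes directly between consecutive bitangents, whose slopes are monotone by~\eqref{slope.monotone}. A continuous function that is piecewise convex with nondecreasing one-sided derivatives at the joints is convex.

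\textbf{Tightness.} Let $g$ be any convex relaxation of $p$ on $\cI$. On the $p$-pieces, $g\le p=e_{\cI}p$. On an affine piece $(x_t^L,x_t^U)$, the function $e_{\cI}p$ is the chord of $p$ between two points where $e_{\cI}p=p$. By convexity of $g$, for $x$ in this interval, $g(x)$ is at most the convex combination of $g(x_t^L)\le p(x_t^L)$ and $g(x_t^U)\le p(x_t^U)$, which equals $e_{\cI}p(x)$. Hence $g\le e_{\cI}p$, and $e_{\cI}p$ is the convex envelope.

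\textbf{Complexity.} I will use the standard Graham-scan amortization. Each for-iteration pushes exactly one bitangent, and each while-iteration pops one, so the total number of pops is at most $k$. The number of $\bitan$ calls is therefore at most $2k=O(n)$, since $k=O(n)$ by Lemma~\ref{lemma.roots}. Each call costs $O(n\log(1/\epsilon)\log(\mathcal L/\epsilon))$ by Theorem~\ref{thm:bitangent}. Computing $CI_\cI(p)$ costs $O(n^3+\dots)$ by Theorem~\ref{thm:ci} and the root finder. The stated bound then follows, with the root finding treated as a separate preprocessing cost as in the statement.

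\textbf{Main obstacle.} I expect the hardest step to be the convexity gluing, specifically the degenerate cases. These are touching points that coincide with endpoints of convex intervals, and consecutive bitangents sharing a touching point ($x_t^U=x_{t+1}^L$). In those cases I will argue directly from~\eqref{slope.monotone} rather than from derivative bounds.
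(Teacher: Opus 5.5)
Your proposal is correct and follows the paper's proof essentially step for step. The shared structure is: minorization from Lemma~\ref{lemma.stack}; convexity from the ordering and derivative inequalities of Lemma~\ref{lemma.slopes}; tightness via the chord argument on each affine piece against an arbitrary convex minorant $g$; and the amortized push/pop count giving $O(n)$ bitangent calls. Beyond that, you only make explicit what the paper leaves implicit: that each $p$-piece lies inside a single convex interval, that degenerate joints are handled by slope monotonicity, and that the root-finding cost for $CI_\cI(p)$ is treated as separate preprocessing.
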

\begin{proof}
{For runtime of this algorithm, note that each convex interval only enters and leaves the stack $S$ at most once each; since there are $O(n)$ convex intervals, there are $O(n)$ stack operations; and since there is one bitangent computation per stack operation, the overall runtime is dominated by $O(n)$ queries to \Cref{alg:bisect}, so the overall runtime is indeed $O\left(n^2\log(1/\epsilon)\log (\mathcal L/\epsilon)\right)$.}

Now we prove that $e_{\cI}p$ is the convex envelope of $p$ on $\cI$.  First, the construction~\eqref{graham.env} of $e_{\cI}p$ and~\Cref{lemma.stack} 
imply that $e_{\cI}p \le p$.  In addition, $e_{\cI}p$ is convex because Lemma~\ref{lemma.slopes} implies that $e_{\cI}p$ has a non-decreasing derivative on $\cI$.  Hence to finish it suffices to show that
for any convex $g$ that minorizes $p$ on $\cI$ the following inequality holds for all $x\in \cI$:
\begin{equation}\label{tight}
e_{\cI}p(x)\ge g(x).
\end{equation}
To that end, let $g$ be a convex minorizer of $p$ on $\cI$ and $x\in \cI$.  We will consider two possible cases:

{\bf Case 1:} The point $x$ satisfies $x_t^L < x < x_t^U$ for some $t\in\{1,\dots,r\}$.  Since $x_t^L,x_t^U$ are roots of $p- \bitan_p(I_{i_t},I_{i_{t+1}})$ and $\bitan_p(I_{i_t},I_{i_{t+1}})$ is affine, equation~\eqref{graham.env} implies that
\[
\begin{aligned}
e_{\cI}p(x) &= \bitan_p(I_{i_t},I_{i_{t+1}})(x) \\
&= \frac{x-x_t^L}{x_t^U-x_t^L} \bitan_p(I_{i_t},I_{i_{t+1}})(x_t^U) + \frac{x_t^U-x}{x_t^U-x_t^L} \bitan_p(I_{i_t},I_{i_{t+1}})(x_t^L) \\&= \frac{x-x_t^L}{x_t^U-x_t^L} p(x_t^U) + \frac{x_t^U-x}{x_t^U-x_t^L} p(x_t^L) \\&\ge \frac{x-x_t^L}{x_t^U-x_t^L} g(x_t^U) + \frac{x_t^U-x}{x_t^U-x_t^L} g(x_t^L) \\&\ge g(x).
\end{aligned}
\]
The first step above follows from~\eqref{graham.env}.  The second step holds because $\bitan_p(I_{i_t},I_{i_{t+1}})$ is affine and $x = \frac{x-x_t^L}{x_t^U-x_t^L}x^U_t+
\frac{x_t^U-x}{x_t^U-x_t^L}x^L_t.$ The third step holds because $x_t^L,x_t^U$ are roots of $p- \bitan_p(I_{i_t},I_{i_{t+1}})$. The fourth step holds because $g$ minorizes $p$.  The fifth step holds because $g$ is convex.

{\bf Case 2:} The point $x$ is outside all of the open intervals $(x_t^L,x_t^U)$ for $t\in\{1,\dots,r\}$.  In this case equation~\eqref{graham.env} implies that
\[
e_{\cI}p(x) = p(x)\ge g(x).
\]
In either case~\eqref{tight} holds.
\end{proof}

%% file: sections/gam.tex
\section{Convex Relaxation of  Polynomial Kolmogorov–Arnold Networks}
\label{sec:pkan} 

We focus on  {Polynomial Kolmogorov–Arnold Networks} (PKANs), a subclass of KANs \cite{liu2024kan} in which each univariate component is represented by a polynomial. This restriction preserves the expressive power of standard KANs while enabling exact convexification of each component. Using the Continuous Graham Scan (\Cref{alg:graham}), we construct exact convex envelopes for each polynomial and combine them according to the network’s additive structure to obtain a global convex relaxation. Formally, we define PKANs as follows:

\begin{definition}[Polynomial Kolmogorov-Arnold Network (PKAN)]\label{def:pkan}
    A Polynomial Kolmogorov-Arnold Network (PKAN) with $L$ layers is defined as 
    \begin{equation}\label{eq:kan2}
        \mathbf{PKAN}(\mathbf x) := \left(\Phi_L \circ \Phi_{L-1} \circ \cdots \circ \Phi_1\right)(\mathbf x)
    \end{equation}
    where each $\Phi_K: \R^{d_{K-1}} \to \R^{d_K}$ can be written component-wise as
    \begin{equation}\label{eq:kan-component}
    (\Phi_K(\mathbf x))_i = \sum_{j=1}^{d_{K-1}} \phi_{K, i, j}(x_j),
    \end{equation}
    with univariate \textbf{\underline{polynomials}} $\phi_{K,i,j}$ of degree $n$. The positive integers $d_K$ indicate the size of the $K^{\text{th}}$ layer of the KAN, \textbf{\underline{and $d_L := 1$.}}
\end{definition}

To enable global optimization of $\mathbf{PKAN}(\mathbf x)$ over a hyper-rectangle domain $\prod_{i=1}^{d_0} [l_i, u_i]$, with $l_i \le u_i$, we construct a high-quality convex relaxation of the network. This is done in two steps. First, we reformulate  \Cref{eq:kan2} as a constrained optimization problem via an epigraph representation, where each constraint involves a sum of univariate polynomials corresponding to a network layer. Second, we apply the Continuous Graham Scan (\Cref{alg:graham}) to each univariate polynomial to obtain its exact convex envelope and combine these envelopes according to the additive structure of the network, yielding a tractable global convex relaxation of $\mathbf{PKAN}(\cdot)$.

Specifically, 
epigraph formulation of the KAN objective is 
\begin{equation}\label{eq:kan-mp}
\begin{aligned}
\min_{z, t} \;& t \\
\text{s.t. } 
& t \ge z_{L,1}, \\
& z_{0,i} \in [l_i, u_i], && \forall \ i \in \{1,...,d_0\}, \\
& z_{K,i} = \sum_{j=1}^{d_{K-1}} \phi_{K,i,j}(z_{K-1,j}), 
&& \forall \ K \in \{1,...,N\},  \ i \in \{1,...,d_K\},
\end{aligned}
\end{equation}
and its the convex relaxation is
\begin{equation}\label{eq:kan-relax}
\begin{aligned}
\min_{z,t} \;& t \\
\text{s.t. } 
& t \ge z_{L,1}, \\
& z_{K,i} \in \left[l_{K}^{(i)}, u_{K}^{(i)}\right], 
&& \forall  \ K \in \{1,...,N\},  \ i \in \{1,...,d_i\}, \\
& z_{K,i} \ge \sum_{j=1}^{d_{K-1}} e_{[l_{K-1}^{(j)}, u_{K-1}^{(j)}]}\phi_{K,i,j}(z_{K-1,j}), 
&& \forall  \ K \in \{1,...,N\},  \ i \in \{1,...,d_K\}, \\
& z_{K,i} \le \sum_{j=1}^{d_{K-1}} E_{[l_{K-1}^{(j)}, u_{K-1}^{(j)}]}\phi_{K,i,j}(z_{K-1,j}), 
&& \forall  \ K \in \{1,...,N\},  \ i \in \{1,...,d_K\}.
\end{aligned}
\end{equation}

where $e_S f, E_S f$ denote the tight convex and concave envelopes of $f$ over a set $S$, and we obtain such envelopes using \Cref{alg:graham}.
We note that when the PKAN appears in the constraints (e.g., $\mathbf{PKAN}(x) \le 0$), the same construction yields a convex outer approximation of the feasible set. In this case, the envelope constraints relax the graph of the PKAN, ensuring that the feasible region of~\eqref{eq:kan-relax} contains that of~\eqref{eq:kan-mp}. Thus, the formulation remains unchanged, with the interpretation shifting from a lower bound on the objective to a valid relaxation of the constraint set.

The variable bounds $[l_{K}^{(i)}, u_{K}^{(i)}]$ can be recursively computed via
$$\begin{cases}
    [l_{0}^{(i)}, u_{0}^{(i)}] := [l_i, u_i] \\\\
    \displaystyle{[l_{K}^{(i)}, u_{K}^{(i)}] := \left[\sum_{j=1}^{d_{K-1}} \inf_{x\in [l_{K-1}^{(j)}, u_{K-1}^{(j)}]} \phi_{K,i,j}(x),\ \sum_{j=1}^{d_{K-1}} \sup_{x\in [l_{K-1}^{(j)}, u_{K-1}^{(j)}]} \phi_{K,i,j}(x)\right]}
\end{cases}$$
Note that computing $\inf$ and $\sup$ over fixed intervals is efficient: it suffices to first construct the convex and concave envelopes $e_{[l,u]}\phi$ and $E_{[l,u]}\phi$ using \Cref{alg:graham}, and then perform one-dimensional optimization either via bisection on the derivative or using the golden-section method on the envelopes themselves.
In the relaxed problem in \eqref{eq:kan-relax}, each constraint is a sum of univariate envelopes. Each envelope is piecewise, with segments that are either affine or known polynomials, which allows straightforward computation of second-order information. Because problem in \eqref{eq:kan-relax} is convex, it can be efficiently solved using standard interior-point solvers such as \texttt{Ipopt} \cite{wachter2006implementation}.

\subsection{Tight Convex Relaxation of Polynomial GAMs}
\label{sec:gams}
In this section, we establish a provable relaxation guarantee for a subclass of PKANs corresponding to Generalized Additive Models (GAMs) \cite{hastie1986generalized, hastie2017generalized}. Specifically, we show that the Continuous Graham Scan algorithm from \Cref{sect:graham} produces a convex relaxation for GAMs when all univariate components are polynomials and the link function $\Phi$ is monotone. This result illustrates how the tight univariate envelopes developed in \Cref{sect:graham} propagate through the additive structure of GAMs without introducing conservatism at the global optimum.

First introduced by Hastie and Tibshirani \cite{hastie1986generalized} and implemented in surrogate modeling software such as \texttt{PyGAM} \cite{serven2018pygam}, a GAM is a nonlinear function $M:\mathbb{R}^n \to \mathbb{R}$ of the form
\begin{equation}\label{eq:gam}
    M(x) := \Phi\left(\sum_{i} \psi_i(x_i)\right)
\end{equation}
where each $\psi_i$ is a univariate function and $\Phi$ is the link function that aggregates the component outputs into a single scalar. Throughout this section, we assume $\Phi$ is monotone (either increasing or decreasing) and invertible, which ensures that the convexity of the univariate envelopes is preserved in the overall relaxation.

Note that a GAM whose univariate functional forms are all polynomials can be viewed as a special case of a Polynomial KAN. Specifically, consider a KAN with two layers, $L=2$, and layer dimensions $[d_0, d_1, d_2] = [\operatorname{dim}(\mathbf{x}),1,1]$, so that
\[
\mathbf{K A} \mathbf{N}_{\mathrm{GAM}}(\mathbf{x})=\Phi_2 \circ \Phi_1(\mathbf{x}), \quad \Phi_1: \mathbb{R}^n \rightarrow \mathbb{R}, \quad \Phi_2: \mathbb{R} \rightarrow \mathbb{R}.
\]
If we define
\[
\Phi_1(\mathbf{x}):=\sum_{i} \psi_i\left(x_i\right), \quad \Phi_2:=\Phi,
\]
where $\Phi$ is the link function of the GAM, then $\mathbf{KAN}_{\text{GAM}}(\mathbf{x})$ coincides exactly with the GAM expression. This identification shows that polynomial GAMs inherit the additive univariate structure of PKANs, allowing the same convexification techniques to be applied.

We now focus on GAMs with polynomial components and show how the tight envelopes computed by the Continuous Graham Scan can be incorporated into a McCormick-style composition framework. Consider a GAM of the form $M(x) = \Phi\left(\sum_i p_i(x_i)\right)$ where each $p_i$ is a univariate polynomial and $\Phi:\mathbb{R}\to\mathbb{R}$ is a differentiable monotone link function. Using the exact convex envelopes of the $p_i$'s computed via \Cref{alg:graham}, the classical recursive relaxation scheme of McCormick \cite{mccormick1976computability}, together with bounds propagation through $\Phi$, produces a convex relaxation $M'$ of $M$ that is tight at the global optimum when $M$ is being minimized. Specifically, as shown in \Cref{thm:gam-envelope},
\begin{equation}
\min _{\mathbf{x}} M^{\prime}(\mathbf{x})=\min _{\mathbf{x}} M(\mathbf{x}) \quad \text { and } \quad \arg \min _{\mathbf{x}} M^{\prime}(\mathbf{x})=\arg \min _{\mathbf{x}} M(\mathbf{x}) .
\end{equation}
The construction proceeds in two steps: first, the separable polynomial sum is relaxed component-wise, and second, these bounds are propagated through the monotone link function to obtain the overall convex relaxation.

We begin by formalizing the model class and the domain over which the relaxation is constructed.

\begin{definition}[Monotone Polynomial GAM]\label{def:mpgam}
    { Let $x_1, \cdots, x_d$ be the decision variables for optimization, and let hyper-rectangle $B = \prod_{i=1}^d [x_i^L, x_i^U] \subset \R^d$ be the permissible domain of all decision variables $x_i$. We define a Monotone Polynomial Generalized Additive Model (MPGAM) to be a function $M: B \to \R$, such that
    $$M(x) = \Phi(P(x)) = \Phi\left(\sum_i^d p_i(x_i)\right)$$ 
    where the link function $\Phi: \R \to \R$ is differentiable, monotone, and either increasing or decreasing, and each $p_i$ is a univariate polynomial over $x_i$.}
\end{definition}

Next, we define the separable relaxation of the polynomial sum, which is the key object that allows the univariate envelopes to scale to higher dimensions.

\begin{definition}[Component-wise Envelopes]\label{def:comp-relax}
    { Let $P(x) := \sum_{i=1}^d p_i(x_i)$ be the sum of univariate polynomials, and let the domain of $p$ be $B = \prod_{i=1}^d [x_i^L, x_i^U]$. Then we define 
    $$\begin{cases}
        P^-(x) := \sum_i e_{[x_i^L, x_i^U]} p_i(x_i)\\
        P^+(x) := \sum_i E_{[x_i^L, x_i^U]} p_i(x_i)
    \end{cases}$$
    to be the \textbf{component-wise convex envelope} (in the case of $P^-$) and \textbf{component-wise concave envelope} (in the case of $P^+$) of $P(x)$. In essence, $P^-$ is the sum of the convex envelope of each $p_i$, and $P^+$ is the sum of concave envelope of each $p_i$.}
\end{definition}

The following theorem shows that this separable construction is sufficient to preserve global optimality when composed with a monotone link function.

\begin{theorem}\label{thm:gam-envelope}
    { Let $M$ be a monotone polynomial GAM defined in \Cref{def:mpgam} over the domain $B = \prod_{i=1}^d B_i = \prod_{i=1}^d [x_i^L, x_i^U]$, and let $M': B \to \R$ be defined as
    { 
    $$M'(x) := \begin{cases}
        e_I \Phi\left(P^-(x)\right) &\text{if $\Phi$ is monotonically increasing; }\\
        e_I \Phi\left(P^+(x)\right) &\text{if $\Phi$ is monotonically decreasing}
    \end{cases}$$
    }
    where $I = [\min_{x\in B} P^-(x), \max_{x\in B} P^+(x)]$.} Then the following properties hold: 
    \begin{enumerate}[(a)]
        \item $M'$ is convex relaxation of $M$: specifically, $M'$ is convex and $M'(x) \le M(x) \quad \forall x \in B$.
        \item $\argmin_{x \in B} M'(x) = \argmin_{x \in B} M(x)$.
        \item $\min_{x \in B} M'(x) = \min_{x \in B} M(x)$
    \end{enumerate}
\end{theorem}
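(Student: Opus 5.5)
The plan is to treat the two monotonicity cases in parallel, and to reduce everything to three facts about univariate envelopes.

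\textbf{Facts used.} For a continuous $f$ on a compact interval $J=[a,b]$:
\begin{enumerate}[(i)]
\item $e_J f\le f$, and $e_J f$ coincides with $f$ at the endpoints $a,b$. For polynomial $f$ this is visible from~\eqref{graham.env}, since $x_L\in I_0$ and $x_U\in I_k$. In general it holds because the lower convex hull of a graph over a compact interval passes through its endpoints.
\item $\min_J e_J f=\min_J f$, and every minimizer of $f$ is a minimizer of $e_J f$. Indeed, the constant $\min_J f$ is a convex minorant, so $e_J f\ge\min_J f$, while $e_J f\le f$.
\item If $f$ is nondecreasing (resp.\ nonincreasing), then so is $e_J f$. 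For the nondecreasing case, the function $x\mapsto \min_{y\in[x,b]} e_J f(y)$ is convex, lies below $f$, and dominates $e_J f$, so it equals $e_J f$.
\end{enumerate}
Applying (ii) coordinatewise together with separability gives $\min_B P^-=\sum_i\min p_i=\min_B P$. Symmetrically, $\max_B P^+=\max_B P$. Also $P^-$ is convex and $P^+$ is concave, being sums of univariate convex (resp.\ concave) envelopes by \Cref{thm:graham}.

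\textbf{Part (a).} Assume $\Phi$ is increasing. Then $I=[\min_B P,\max_B P]$, and $P^-(x)\in I$ for every $x\in B$ because $\min_B P^-\le P^-(x)\le P(x)\le\max_B P$. Convexity of $M'$ follows because a nondecreasing convex function (fact (iii)) composed with a convex function is convex. The bound follows from the chain
\[
e_I\Phi(P^-(x))\le\Phi(P^-(x))\le\Phi(P(x))=M(x),
\]
using $P^-\le P$ and monotonicity of $\Phi$. The decreasing case is symmetric. Here $e_I\Phi$ is nonincreasing and convex, $P^+$ is concave, so the composition is convex. The bound is $e_I\Phi(P^+)\le\Phi(P^+)\le\Phi(P)$, using $P^+\ge P$.

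\textbf{Part (c).} Take the increasing case and let $m=\min_B P^-=\min_B P$, which is the left endpoint of $I$. Since $e_I\Phi$ is nondecreasing, $\min_B M'=e_I\Phi(m)$, and this equals $\Phi(m)$ by (i). Since $\Phi$ is increasing, $\min_B M=\Phi(m)$ as well. In the decreasing case one uses the right endpoint $\max_B P^+=\max_B P$ in the same way.

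\textbf{Part (b)} is where I expect the main obstacle.
\begin{itemize}
\item The inclusion $\argmin M\subseteq\argmin M'$ is straightforward. If $x^*$ minimizes $M$, then $M'(x^*)\le M(x^*)=\min M=\min M'$ by (a) and (c).
\item The reverse inclusion is delicate. $\argmin M'$ contains $\{x: P^-(x)=m\}$, which is the product of the sets $\conv(\argmin p_i)$. It also contains any flat region where $e_I\Phi$ is constant near $m$.
\item Both sets can be strictly larger than $\argmin M$. For example, take $p_i$ with two separated global minimizers, or $\Phi$ non-strictly monotone.
\end{itemize}
So I would first prove the inclusion. I would then establish equality under the natural hypotheses that $\Phi$ is strictly monotone and each $p_i$ has a unique minimizer on $[x_i^L,x_i^U]$; these make both argmin sets singletons and hence equal. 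Otherwise I would state the result as ``every minimizer of $M$ minimizes $M'$, with equal optimal values.'' As literally stated, (b) requires such an assumption.
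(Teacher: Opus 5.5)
For (a) and (c) you follow the paper's argument. You compose the monotone convex envelope $e_I\Phi$ with the convex $P^-$ (resp.\ concave $P^+$), use the chain $e_I\Phi(P^-)\le\Phi(P^-)\le\Phi(P)$, and for (c) use that $e_I\Phi$ agrees with $\Phi$ at the endpoint of $I$ attained by $P^-$ (resp.\ $P^+$). Your checks that $P^-(B)\subseteq I$ and $\min_B P^-=\min_B P$ supply details the paper leaves implicit.

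Your objection to (b) is correct, and it exposes an error in the paper's proof. The paper asserts $\argmin_{z} e_{[x_i^L,x_i^U]}p_i(z)=\argmin_{z}p_i(z)$, justified ``by definition of tight convex envelope''. That justification only gives $\supseteq$; in general $\argmin_J e_J p=\conv(\argmin_J p)$. A concrete counterexample within the paper's hypotheses: take $d=1$, $\Phi(t)=t$ and $p_1(x)=(x^2-1)^2$ on $[-2,2]$. Then $\argmin_B M=\{-1,1\}$, while $M'=P^-$ vanishes on all of $[-1,1]$. Your other suspected failure mode, a non-strictly monotone $\Phi$, is excluded by the paper's standing assumption at the start of the GAM subsection that $\Phi$ is invertible.

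Two repairs to your own argument are needed.

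First, your proof of fact (iii) is circular. The inequality $h(x)=\min_{y\in[x,b]}e_Jf(y)\le e_Jf(x)$ holds trivially, and $h$ being a convex minorant of $f$ again gives only $h\le e_Jf$. The asserted reverse inequality $h\ge e_Jf$ is exactly the monotonicity of $e_Jf$ you want to prove. Use (i) and (ii) instead: for nondecreasing $f$ on $[a,b]$, $e_Jf(a)=f(a)=\min_J f=\min_J e_Jf$. So the convex function $e_Jf$ is minimized at the left endpoint and is therefore nondecreasing. This is also sounder than the paper's derivative/mean-value argument for its Claim, which presumes $e_I\Phi$ is differentiable and treats $\Phi'(z)= 0$ as contradicting monotonicity.

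Second, equality in (b) under your hypotheses needs $e_I\Phi$ to be strictly monotone. Otherwise $\argmin_B M'$ can be larger than $\{x:P^-(x)=m\}$. The paper's first equality $\argmin_B M'=\argmin_B P^-$ silently needs the same fact. It does hold:
\begin{itemize}
\item For strictly increasing $\Phi$, $\argmin_I e_I\Phi=\conv(\argmin_I\Phi)=\{m\}$.
\item A convex nondecreasing function whose unique minimizer is the left endpoint is strictly increasing.
\end{itemize}
With this, $\argmin_B M'=\prod_i\conv(\argmin p_i)$ in the increasing case and $\prod_i\conv(\argmax p_i)$ in the decreasing case. Hence (b) holds exactly when each nonconstant $p_i$ has a unique minimizer on $[x_i^L,x_i^U]$. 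In the decreasing case the condition is a unique \emph{maximizer}, not the minimizer condition you stated.
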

\vspace{3mm}
{ 
\begin{remark}
    The relaxation $M'$ of $M$ in \Cref{thm:gam-envelope} is essentially the same as formulating $M$ as a constrained optimization problem in epigraph form, as in \Cref{eq:kan-mp}, and relaxing each constraint to obtain \Cref{eq:kan-relax}. 
\end{remark}
}

\begin{proof}[Proof of \Cref{thm:gam-envelope}]

    We prove items (a)-(c) separately.  In (a) and (b) we rely on the following claim.

    {\bf Claim.} If $\Phi$ is monotonically increasing, then its convex envelope $e_I \Phi$ is also monotonically increasing.  Likewise if 
   $\Phi$ is monotonically decreasing, then its convex envelope $e_I \Phi$ is also monotonically decreasing. 

   By symmetry, it suffices to prove the first part of the claim.  Note that $e_I \Phi$ is a piecewise function made up of sections of $\Phi$ and bitangent lines to the graph of $\Phi$, so $e_I \Phi$ is differentiable.  If $e_I \Phi$ is not monotonically increasing then $e_I \Phi'(x) \le 0$ for some $x$, then either by definition or by Mean Value Theorem, we can find some $z$ such that $\Phi'(z) \le 0$, which contradicts the assumption that $\Phi$ is monotonically increasing.

    \begin{enumerate}[(a)]
        \item To show $M'$ being a convex relaxation of $M$, we first focus on the case where $\Phi$ is monotonically increasing: for any $v,w \in \R^d$, we define $q_{v,w}(t) := P^-(v+tw)$; intuitively, $q_{v,w}$ is the restriction of $P^-$ onto a one-dimensional affine subset of $\R^d$, parameterized by $t$; since $P^-$ is a sum of convex functions and therefore convex, we have $q_{v,w}$ convex. 
        This reduction to one-dimensional affine slices allows us to verify convexity using standard composition rules.
        Since both $e_I \Phi$ and $q_{v,w}$ are convex, and $e_I \Phi$ is monotonically 
        increasing, we have $e_I \Phi \circ q_{v,w}$ convex in $t$. Since $v,w$ are arbitrary, it follows from zeroth order convexity that $M' := e_I \Phi \circ P^-$ is convex in $B$. Furthermore, for any $x \in B$, we have 
        $$P^-(x) = \sum_i e_{[x_i^L, x_i^U]} p_i(x_i) \le \sum_i p_i(x_i) = P(x)$$
        where the inequality follows from definition of convex envelope, so
        $$M'(x) = e_I \Phi(P^-(x)) \le \Phi(P^-(x)) \le \Phi(P(x))$$
        where the first inequality is due to $e_I \Phi$ being the tight convex envelope of $\Phi$ within $I$, which is within the possible range of values of $P^-$; and the second inequality is due to $P^- \le P$ and that $\Phi$ is monotonically increasing. 
    
        In case where $\Phi$ is monotonically decreasing, the argument is symmetric with convexity obtained from the composition of a convex decreasing function with a concave inner function, and underestimation following from $P \le P^+$.
        For convexity, we perform a restriction and notice that $f \circ g$ is convex if $f$ is convex and decreasing, and $g$ is concave. For underestimation, we notice that $P \le P^+$ and use the fact that $\Phi$ is monotone decreasing to derive
        $$M'(x) = e_I \Phi(P^+(x)) \le \Phi(P^+(x)) \le \Phi(P(x))$$

        \item {  To show exactness at the global optimum, namely $\argmin_x M'(x) = \argmin_x M(x)$, we still focus first on the case where $\Phi$ is monotonically increasing, since the proof for when $\Phi$ is monotonically decreasing is similar.}

        Since $e_I \Phi$ monotonically increasing, we have that
        \begin{align*}
            \argmin_{x \in B} M'(x) &= \argmin_{x \in B} P^-(x) \\
            &= \prod_{i=1}^d \argmin_{z \in [x_i^L, x_i^U]} \left(e_{[x_i^L, x_i^U] }p_i(z)\right)\\
            &= \prod_{i=1}^d \argmin_{z \in [x_i^L, x_i^U]} \left(p_i(z)\right)\\
            &= \argmin_{x \in B} P(x) \\
            &= \argmin_{x \in B} M(x).
        \end{align*}
        The separability of $P^-$ implies that its minimization decomposes component-wise. Thus, the first equality is by monotonicity of $e_I \Phi$, the second and fourth equality is by the fact that each component of the summation is independent, the third equality is by definition of tight convex envelope, and the last equality is from recalling that $M = \Phi \circ P$ and that $\Phi$ is monotone. Throughout the equalities above, we use $\prod$ for cartesian product of an indexed family of sets. {  We therefore conclude that $\argmin_{x \in B} M'(x) = \argmin_{x \in B} M(x)$.

        In case where $\Phi$ is monotonically decreasing, it similarly holds that $E_I \Phi$ is also monotonically decreasing, so a similar chain of reasoning yields 
        \begin{align*}
            \argmin_{x \in B} M'(x) &= \argmax_{x \in B} P^+(x) \\
            &= \prod_{i=1}^d \argmax_{z \in [x_i^L, x_i^U]} \left(E_{[x_i^L, x_i^U] }p_i(z)\right)\\
            &= \prod_{i=1}^d \argmax_{z \in [x_i^L, x_i^U]} \left(p_i(z)\right)\\
            &= \argmax_{x \in B} P(x) \\
            &= \argmin_{x \in B} M(x).
        \end{align*}
        where the first and last equalities are due to $\Phi$ and $E_I\Phi$ being monotonically decreasing. We therefore similarly conclude that $\argmin_{x \in B} M'(x) = \argmin_{x \in B} M(x)$.
        }

        \item Finally, we show that the optimal objective value is unchanged, namely $\min_x M'(x) = \min_x M(x)$. {  Once again, we first focus on the case where $\Phi$ is monotonically increasing. } Fix $x^* := \argmin_{x \in B} M(x)$. Then by definition of tight convex envelope, we have
        $$P^-(x^*) = \sum_i e_{[x_i^L, x_i^U]} p_i(x_i^*) = \sum_i p_i(x_i^*) = P(x^*)$$
        so we have $P^-(x^*) = P(x^*) = \inf_B P^-$; due to the monotonicity of $\Phi$ {  and the optimality of $\inf_B P^-$}, we have $e_I \Phi(\inf_B P^-) = \Phi(\inf_B P^-)$, so 
        $$\min_{x\in B} M'(x) = M'(x^*) := e_I \Phi(P^-(x^*)) = \Phi(P(x^*)) =: M(x^*) = \min_{x \in B} M(x)$$
        and we are done with the case for $\Phi$ being monotonically increasing. 
        
        For the other case, we follow a similar proof, { where we fix $x^* := \argmin_{x\in B} M(x)$ but instead obtain $P^+(x^*) = P(x^*) = \sup_B P^+$, which by monotonicity of $\Phi$ and the optimality of $\sup_B P^+$, implies $E_I \Phi(\sup_B P^+) = \Phi(\sup_B P^+)$, and hence $\min_{x \in B} M'(x) = \min_{x\in B} M(x)$ follows similarly. }  
    \end{enumerate}
\end{proof}

This establishes that tight univariate envelopes, such as the ones obtained by the Continuous Graham Scan, combined with monotonicity are sufficient to obtain a globally exact convex relaxation for MPGAMs.



%% file: sections/results_v2.tex
\section{Numerical Results} \label{sec:results}

We evaluate the quality of the proposed convex relaxation on a set of randomly generated PKAN instances.
The architectures of the corresponding models are varied along four structural dimensions: the number of hidden layers ($L$), the number of input variables, the number of hidden units per layer ($d_i$), and the degree of the polynomials ($n$), each taking values in $\{4,5,6\}$. For simplicity, all hidden layers in a given architecture use the same number of units $d_i$.
We restrict the study to these values because larger configurations, particularly those involving $L=7$ or $d_i=7$, rapidly become computationally intractable to solve to global optimality.
For each architecture, 50 independent networks are randomly generated with all input variables bounded in the hypercube $[-1.5,1.5]$. 
All models were implemented in \texttt{Pyomo} \cite{bynum2021pyomo}, and all computational experiments were conducted on a Linux machine running Ubuntu, equipped with eight Intel, Xeon Gold 6234 CPUs (3.30 GHz) and 1~TB of RAM. A total of eight hardware threads were used for all runs.

The global optimum value $f^\star$ of every PKAN is computed by solving the optimization problem \eqref{eq:kan-mp} to global optimality with \texttt{SCIP v9.2.4} \cite{bolusani2024scip}. To assess the strength of the proposed relaxation, we compare it against the resulting lower bounds from the root-node relaxations of state-of-the-art global optimization solvers, namely \texttt{SCIP v9.2.4} \cite{bolusani2024scip}, \texttt{BARON v25.12.10} \cite{zhang2025solving}, and \texttt{LINDOGLOBAL v15.0} \cite{lin2009global}. These bounds correspond to the first convex relaxation constructed by each solver prior to any spatial branching. 
In our experiments, all solvers are run with their default settings, which include presolve procedures \cite{puranik2017deletion,achterberg2020presolve,matter2023presolving}, bounds tightening \cite{caprara2016theoretical,puranik2017domain,zhang2020optimality}, and advanced convexification strategies \cite{tawarmalani2001semidefinite, meyer2005convex,scott2011generalized}.
The objective value of this root-node relaxation provides a benchmark lower bound, which we denote by $f^{\mathrm{relax}}$.
For comparison, we construct the convex relaxation described in \eqref{eq:kan-relax} and compute its optimal objective value using \texttt{Ipopt v3.14.16} \cite{wachter2006implementation} to obtain our $f^{\mathrm{relax}}$. Despite the use of the solvers' default presolve and heuristic enhancements, the numerical results below demonstrate that the convex relaxation developed in this work produces  stronger lower bounds in competitive time for multi-input PKAN instances.

We evaluate the relaxations in terms of both computational time and tightness of the resulting lower bound. The reported computational time measures the effort required to both constructing and solving each relaxation. 
To assess relaxation tightness, we compute the relative optimality gap with respect to the global optimum,
$$
\text {Relative Gap}=\frac{|f^{\text{relax}}-f^{\star}|}{ \left|f^{\star}\right|+\epsilon} \cdot 100,
$$
where $\epsilon$ is a small numerical tolerance (e.g., $10^{-12}$). Smaller values indicate tighter relaxations.

For each architecture, we compute the sample mean of both the relative gap and the computational time across the 50 instances. We report $95\%$ confidence intervals constructed from the sample standard error using a Student-$t$ distribution. To facilitate comparisons across several orders of magnitude and highlight substantial differences in relaxation strength and computational cost, both quantities are displayed on logarithmic scales in the figures.

The bar plots report the sample means together with the corresponding confidence intervals. Additionally, each bar may include an integer label indicating the number of instances (out of 50) for which the corresponding solver failed to construct a valid root-node relaxation. In these cases, the solver terminated the root-node processing with a lower bound of negative infinity, requiring spatial branching to obtain a nontrivial bound. When such failures occur, the reported average gap and computational time are computed only over the successful instances. Consequently, bars with larger failure counts are based on smaller sample sizes and therefore tend to exhibit wider confidence intervals. If a bar does not display a label, this indicates that all 50 instances produced valid root-node relaxations and no failures were observed.

Figures~\ref{fig:hidden_dim_vs_layers_combined} and \ref{fig:poly_vs_input_combined} examine how the strength and computational cost of the proposed relaxation scale with increasing architectural complexity. The two experiments explore complementary dimensions of the PKAN design space. Figure~\ref{fig:hidden_dim_vs_layers_combined} varies the network width and depth while fixing the input dimension and polynomial degree at their largest values considered in the benchmark. In contrast, Figure~\ref{fig:poly_vs_input_combined} fixes the architecture to its largest configuration ($L=d_i=6$) and varies the input dimension and polynomial degree, thereby isolating the effect of increasing dimensionality and nonlinearity.
Together, these experiments assess how the proposed relaxation behaves as the structural complexity of the underlying PKAN grows, and compare its performance against the root-node relaxations produced by the global optimization solvers \texttt{BARON}, \texttt{SCIP}, and \texttt{LINDOGLOBAL}.

\begin{figure}[htbp]
\centering
\begin{subfigure}{\linewidth}
    \centering
    \includegraphics[width=\linewidth]{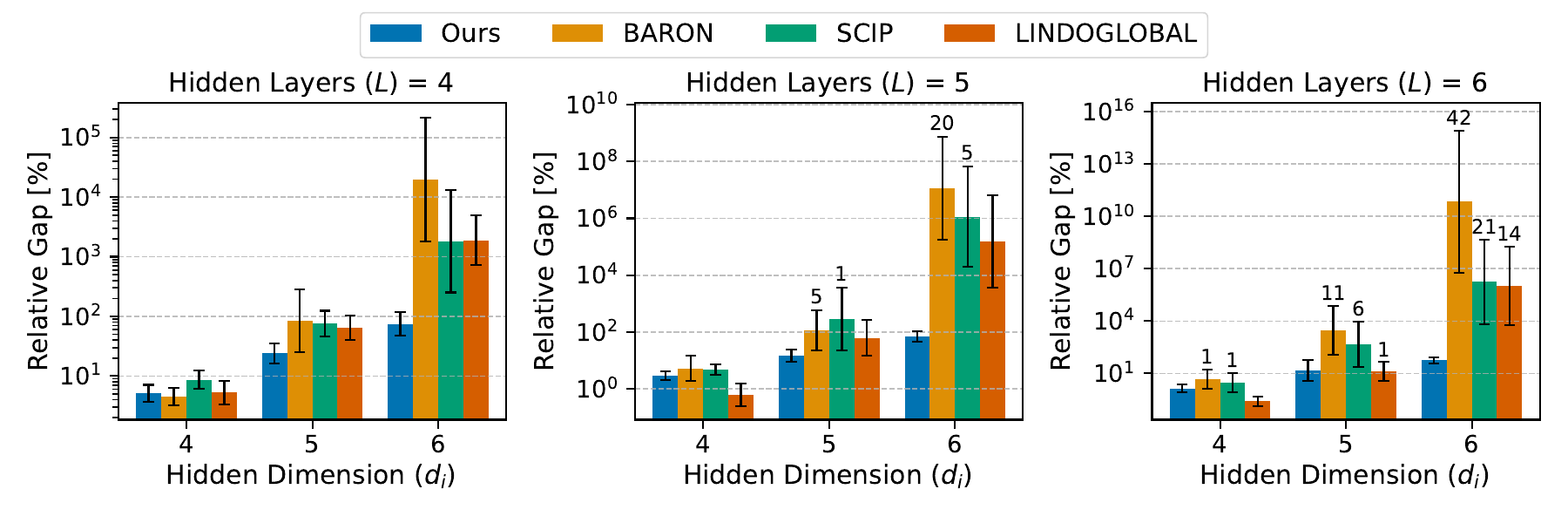}
    \caption{Relative optimality gap of the relaxation with respect to the global optimum.}
    \label{fig:gap_vs_hidden_dim}
\end{subfigure}
\vspace{0.4cm}
\begin{subfigure}{\linewidth}
    \centering
    \includegraphics[width=\linewidth]{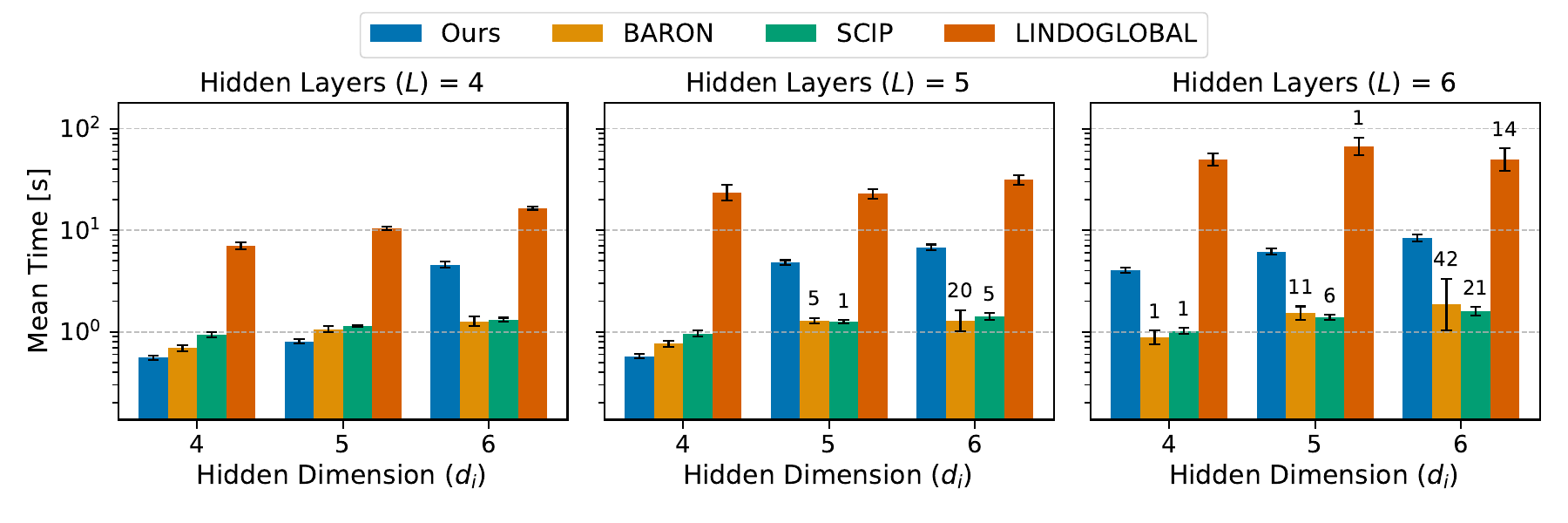}
    \caption{Mean computational time required to construct and solve each relaxation.}
    \label{fig:time_vs_hidden_dim}
\end{subfigure}
\caption{Performance of convex relaxations for PKAN models with six input variables and polynomial degree six as a function of the number of hidden units per layer, with panels grouped by the number of hidden layers. Bars report sample means across 50 randomly generated instances and error bars denote 95\% confidence intervals; both quantities are shown on logarithmic scales. Integer labels above bars indicate the number of instances (out of 50) for which the corresponding solver failed to produce a nontrivial root-node relaxation. When failures occur, averages are computed only over successful instances. Absence of a number indicates that all instances produced valid bounds.}
\label{fig:hidden_dim_vs_layers_combined}
\end{figure}

\begin{figure}[htbp]
\centering
\begin{subfigure}{\linewidth}
    \centering
    \includegraphics[width=\linewidth]{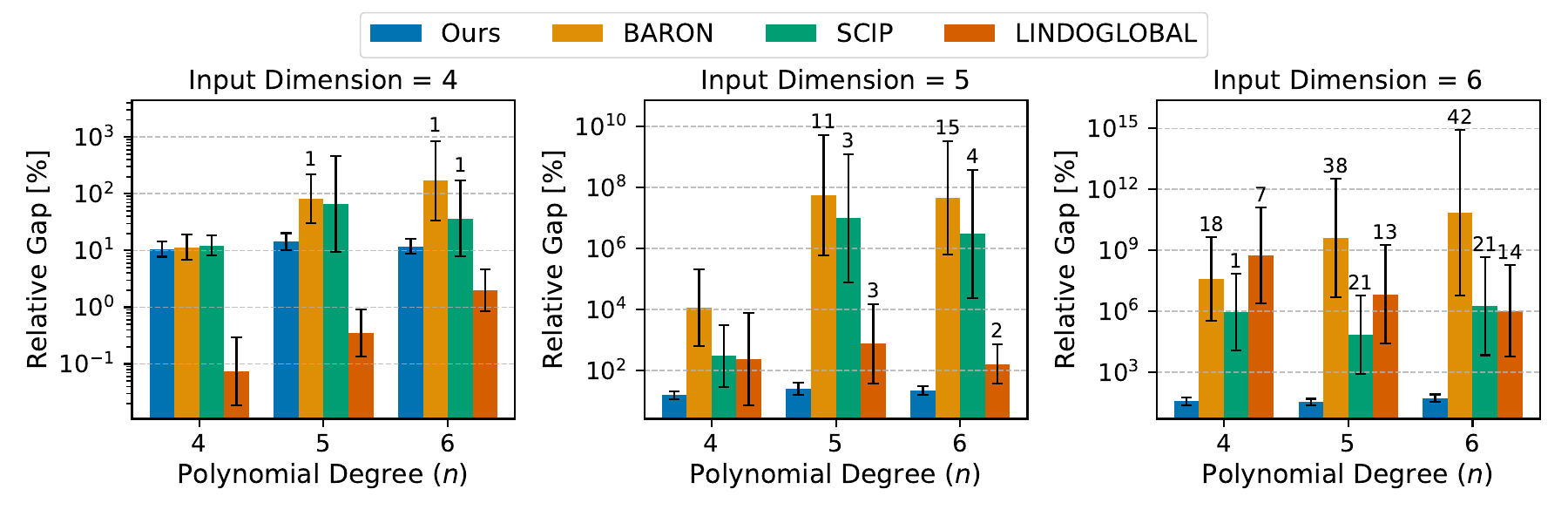}
    \caption{Relative optimality gap with respect to the global optimum as the polynomial degree and input dimension increase.}
    \label{fig:gap_vs_poly_deg_faceted_by_input_dim}
\end{subfigure}
\vspace{0.4cm}
\begin{subfigure}{\linewidth}
    \centering
    \includegraphics[width=\linewidth]{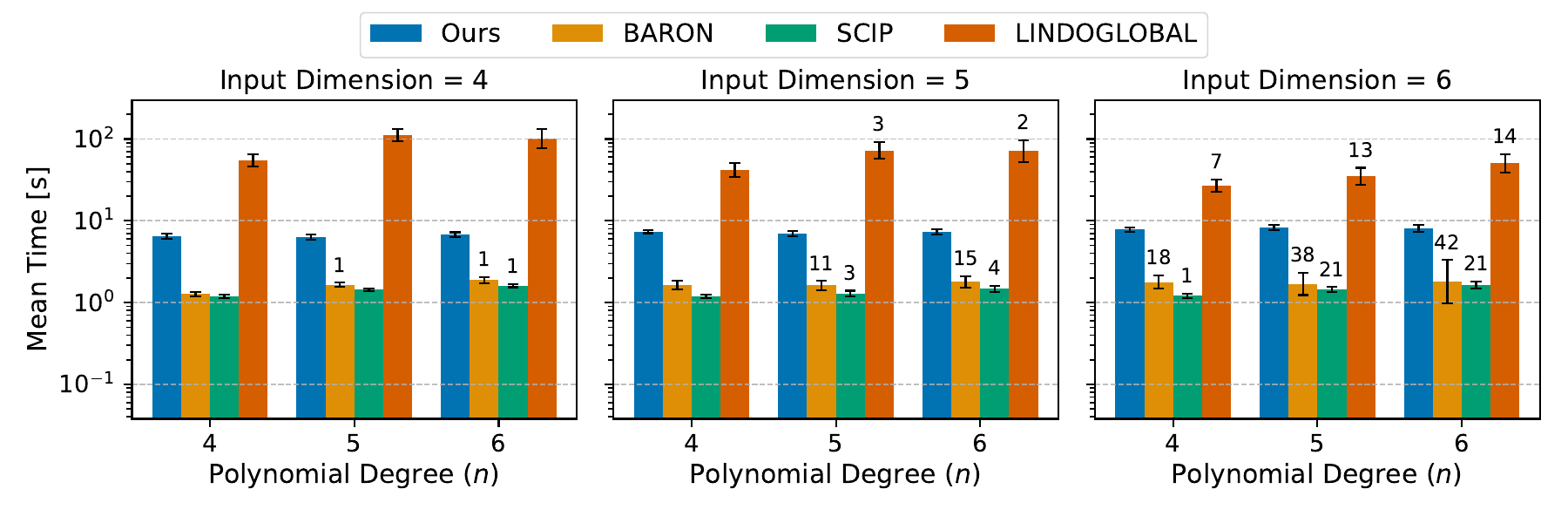}
    \caption{Mean time required to construct and solve the relaxation.}
    \label{fig:time_vs_poly_deg_faceted_by_input_dim}
\end{subfigure}
\caption{Effect of polynomial degree and input dimensionality on the quality and computational cost of convex relaxations for PKAN models with six hidden layers and six hidden units per layer. Bars represent averages over 50 randomly generated instances and error bars correspond to 95\% confidence intervals computed from the sample standard error. Both metrics are displayed on logarithmic scales. Integer labels above bars indicate the number of instances (out of 50) for which the solver failed to produce a valid root-node relaxation. When such failures occur, the reported statistics are computed only over successful instances. Absence of a label shows that all instances produced nontrivial bounds.}
\label{fig:poly_vs_input_combined}
\end{figure}

A first observation concerns the tightness of the resulting relaxations. 
For relatively narrow architectures ($d_i=4$), the proposed relaxation produces bounds that are comparable in strength to those obtained by the global optimization solvers. In these cases the gaps obtained by our method are similar to those of \texttt{SCIP} and \texttt{BARON}, while \texttt{LINDOGLOBAL} occasionally attaining slightly tighter bounds, typically within one order of magnitude.
As the network width increases, however, the advantage of the proposed relaxation becomes more pronounced. For architectures with five or six hidden units per layer, our approach consistently yields the tightest relaxations across all depths. In the widest configuration ($d_i=6$), the resulting bounds are between one and five orders of magnitude tighter than those produced by the competing solvers, with statistically significant differences across all values of $L$.

A second notable feature is the stability of the proposed relaxation with respect to both input dimensionality and polynomial degree, as illustrated in Figure~\ref{fig:poly_vs_input_combined}. Across all configurations, the relative optimality gaps produced by our method remain concentrated around the same order of magnitude, typically near the $10\%$ range. This behavior reflects the structure of the relaxation, which exploits the separability of PKAN layers by constructing convex envelopes for the underlying univariate functions. Consequently, the tightness of the relaxation depends primarily on the quality of these univariate envelopes and is largely insensitive to increases in the input dimension or polynomial degree.
In contrast, global optimization solvers construct relaxations directly in the multivariate space, which causes the quality of the relaxation to deteriorate rapidly as the dimensionality and degree of nonlinearity increase.

Another important distinction concerns robustness. As the architectures become larger and the number of dimensions grows, the global optimization solvers increasingly fail to construct valid root-node relaxations. In these cases the solvers return a lower bound of minus infinity and must rely on spatial branching to obtain their first nontrivial bound. This phenomenon becomes particularly pronounced in the largest instances. For the deepest and widest networks in Figure~\ref{fig:hidden_dim_vs_layers_combined}, \texttt{LINDOGLOBAL}, \texttt{SCIP}, and \texttt{BARON} generate valid root-node bounds for only approximately 64\%, 58\%, and 16\% of the instances, respectively. A similar deterioration is observed in Figure~\ref{fig:poly_vs_input_combined} as both input dimension and polynomial degree increase. In contrast, the proposed relaxation consistently produces finite and nontrivial bounds for all instances across every configuration considered.

Finally, the computational time results reveal a consistent trade-off between relaxation strength and solution time. For smaller architectures the proposed relaxation is typically the fastest to compute. As the networks grow larger, its computational cost increases but remains competitive, generally lying between the times required by \texttt{SCIP}/\texttt{BARON} and those of \texttt{LINDOGLOBAL}. Importantly, whenever the proposed relaxation is slower than \texttt{SCIP} or \texttt{BARON}, it simultaneously produces substantially tighter bounds, often by several orders of magnitude. Moreover, across all configurations the proposed approach remains consistently at least one order of magnitude faster than \texttt{LINDOGLOBAL}. These results highlight a favorable strength-to-cost trade-off for the proposed relaxation.

It is particularly noteworthy that the computational times of the proposed method remain comparable to those of these mature global optimization solvers despite several inherent disadvantages. In particular, constructing the relaxation requires computing all roots of the associated univariate polynomials, introducing an additional preprocessing step. This computation, however, is performed only once per polynomial and is independent of subsequent domain reductions. Consequently, if the proposed relaxation were embedded within a spatial branch-and-bound framework, these root computations could be reused across nodes, thereby reducing the amortized cost of constructing relaxations after branching. Furthermore, the competing solvers are implemented in highly optimized compiled languages such as C++ and Fortran and have benefited from years of engineering and performance tuning, whereas the current implementation of the proposed approach is written in Python. Taken together, these observations further highlight the favorable strength-to-cost trade-off achieved by the proposed relaxation.

Overall, the results indicate that exploiting the separable structure of PKAN layers leads to relaxations that are simultaneously stronger, more reliable, and computationally competitive with those generated by state-of-the-art global optimization solvers.

\section{Conclusions} \label{sec:conclusions}

This work addresses the challenge of constructing strong convex relaxations for polynomial Kolmogorov–Arnold Networks. Although the additive structure of KANs provides a promising foundation for tractable relaxations, existing approaches typically rely on generic factorable relaxations or discrete reformulations, which either fail to exploit the model structure or introduce unnecessary complexity. By restricting the univariate components to polynomials, we show that relaxing a PKAN reduces to computing convex envelopes of univariate polynomials. This observation transforms the relaxation of a high-dimensional nonconvex model into a collection of structured one-dimensional problems that can be treated analytically, yielding relaxations that directly leverage the separable architecture of KAN layers.

To solve this subproblem, we introduced the Continuous Graham Scan, a continuous convexification algorithm that constructs the exact convex envelope of a univariate polynomial over a bounded interval. The method exploits the geometry of the polynomial directly, combining a characterization of its convexity regions with an efficient and provably correct procedure for computing the unique bitangents that define the envelope.
We established the correctness of the algorithm and analyzed its computational complexity. In particular, for a polynomial $p$ of degree $n$, the method requires 
$O\left(n^2 \log (1 / \epsilon) \log (\mathcal L / \epsilon)\right)$ operations where $\epsilon$ denotes the desired numerical precision and $\mathcal L$ is a bound on the derivative satisfying $|p'|\le \mathcal L$. 
Leveraging this algorithm, we construct strong convex relaxations for PKANs by computing exact envelopes of their univariate polynomial components. Furthermore, we show that for the special case of Generalized Additive Models with a monotone link function, the resulting relaxation is tight at the global optimum.

Overall, the computational experiments demonstrate that exploiting the separable structure of PKAN layers yields relaxations that are both strong and robust. For narrow architectures, the proposed relaxation achieves bounds comparable to those produced by state-of-the-art global optimization solvers. As network width increases, however, the advantage becomes increasingly pronounced, with the proposed method producing bounds that are up to several orders of magnitude tighter. At the same time, the tightness of the relaxation remains remarkably stable with respect to increases in input dimensionality and polynomial degree, in contrast to multivariate relaxations constructed by general-purpose solvers, whose quality deteriorates rapidly as dimensionality and nonlinearity grow. The proposed approach also exhibits significantly greater robustness: while global solvers frequently fail to produce finite root-node bounds for the largest instances, the proposed relaxation consistently yields valid lower bounds across all tested configurations. Finally, these improvements in bound quality are achieved at a computational cost that remains competitive with mature global optimization solvers, highlighting a favorable strength–cost trade-off for the proposed relaxation.

A final practical observation concerns the cost of constructing the proposed relaxations. The most computationally demanding step is the initial computation of the polynomial roots required by the envelope construction. However, this step only needs to be performed once for each polynomial, since the roots are intrinsic to the polynomial itself and do not change when the variable bounds are updated. This is particularly relevant in global optimization algorithms such as spatial branch-and-bound, where interval bounds are repeatedly tightened throughout the search. As a result, the one-time root-computation cost can be amortized over many relaxation updates, making the effective cost of recalculating the relaxation at new bounds significantly lower. Incorporating the proposed relaxations into full spatial branch-and-bound frameworks and evaluating their impact on overall algorithmic performance, remains an interesting direction for future research.
